\newtheorem{theorem}{Theorem}[section]
\newtheorem{proposition}{Proposition}[section]
\newtheorem{definition}{Definition}[section]
\newtheorem{corollary}{Corollary}[section]
\newtheorem{remark}{Remark}[section]
\newenvironment{dedication}
{
	\itshape             
	\raggedleft          
}
{\par 
}
\begin{document}

\title[Gauss-Bonnet formula]{A probabilistic proof of the Gauss-Bonnet formula for manifolds with boundary}


\author{Levi Lopes de Lima}

\thanks{Universidade Federal do Cear\'a,
	Departamento de   Matem\'atica, Campus do Pici, R. Humberto Monte, s/n, 60455-760,
	Fortaleza/CE, Brazil.}

\email{levi@mat.ufc.br}

\thanks{Partially suported by  CNPq grant 311258/2014-0 and FUNCAP/CNPq/PRONEX Grant 00068.01.00/15.}

\maketitle

\begin{dedication}
	{\centerline {To the memory of Elon Lages Lima.}}
\end{dedication}

\begin{abstract}
In this short note we outline a simple probabilistic proof of the Gauss-Bonnet formula for compact Riemannian manifolds with boundary, which adapts to this setting an argument due to Hsu \cite{Hs1,Hs2} in the closed case. The new technical ingredient is the Feynman-Kac formula for differential forms satisfying absolute boundary conditions proved in \cite{dL}. Combined with the so-called supersymmetric aproach to index theory, this leads to a path integral representation of the Euler characteristic of the manifold in terms of normally reflected Brownian motion  
whose short time asymptotics clarifies the role played by the shape operator in determining the boundary contribution to the formula. As a consequence 
we obtain the expected {\em local} Gauss-Bonnet formula which upon integration yields the desired global result.      
\end{abstract}

\section{Introduction}\label{intro}

Let $(X,g)$ be a closed, oriented Riemannian manifold of even dimension $n\geq 2$. The Gauss-Bonnet formula computes the Euler characteristic of $X$ as 
\begin{equation}
\label{gbclosed}\chi(X)=\int_{X}{\mathcal P}(R)\,dX,
\end{equation} 
where ${\mathcal P}(R)$ is a universal polynomial expression in the curvature tensor $R$ of $g$.  In case $X$ carries a smooth boundary $Z$ one has
\begin{equation}
\label{gbvoundary}
\chi(X)=\int_{X}{\mathcal P}(R)\,dX+\int_{Z}{\mathcal Q(R,A)}\,dZ,
\end{equation}
where $\mathcal Q(R,A)$ is another universal expression which also depends on the shape operator $A$ of $Z$. Taken together, these results express a topological invariant of a compact manifold by integrating locally defined, metric dependent quantities and therefore are cornerstones in Riemannian Geometry.

The case $n=2$ is of course a classical accomplishment. In 1925, H. Hopf \cite{H} started the higher dimensional saga by establishing (\ref{gbclosed}) for hypersurfaces in $\mathbb R^{n+1}$. Combining this with Weyl's tube formula, Allendoerfer \cite{Al} proved (\ref{gbclosed}) in case the metric is induced by an isometric embedding $X\hookrightarrow\mathbb R^N$ for some $N$. Also, Fenchel \cite{F}  gave an independent proof of this same result. An authorative account of this stage of the theory can be found in \cite{Gr}. We remind however that Nash's embedding theorem was not available at the time, so Allendoerfer and Weil \cite{AW} tackled the general problem by first proving a version of (\ref{gbvoundary})  for  embedded Riemmanian polyhedra $P\hookrightarrow\mathbb R^N$ and then applying the result to the cells of a sufficiently refined triangulation of the given manifold, which is justified since any such cell could be  isometrically embedded by a classical result due to Cartan and Janet. Summing up over the cells and performing the necessary cancellations, proofs of (\ref{gbclosed}) and (\ref{gbvoundary}) emerge in full generality.

A major breakthrough occurred soon afterwards  when Chern \cite{Ch1,Ch2} discovered  {\em intrinsic} proofs of (\ref{gbclosed}) and (\ref{gbvoundary}), an accomplishment so famous that usually these expressions are termed {\em Gauss-Bonnet-Chern formulae}. Later on, with the advent of index theory, it was realized that the Euler characteristic is the Fredholm index of a natural elliptic operator acting on differential forms, which led to attempts to prove the formula by asymptotic methods \cite{MS}. In the closed case, this was first achieved by Patodi \cite{P} and then, in a broader context, by Gilkey \cite{G1} and Atiyah-Bott-Patodi \cite{ABP}.  Finally, a heat equation proof of (\ref{gbvoundary}) appeared in \cite{G2}. 

The refinement of the supersymmetric approach by physicists \cite{A-G,FW} eventually led to a whole new generation of heat equation proofs in index theory \cite{BGV, R, Y}. Almost simultaneously, probabilistic methods started being effectively used to access these results \cite{B}. In the context of (\ref{gbclosed})  above, a rather transparent argument along these lines appears in \cite{Hs1,Hs2}, where it is shown that 
the corresponding {\em local} Gauss-Bonnet formula is a direct consequence of the path integral representation of the heat semigroup of the Hodge Laplacian acting on differential forms derived from a suitable Feynman-Kac formula.   
The situation is much  more complicated in the presence of a boundary essentially because, as explained in \cite{dL}, 
the relevant boundary condition for differential forms is of mixed type, so that a naive application of It\^o's calculus fails to provide an appropriate Feynman-Kac formula for the corresponding Hodge Laplacian. In particular, the problem of figuring out in this context how the extrinsic geometry of the boundary, impersonated by the shape operator, contributes to the final formula in (\ref{gbvoundary}) is far from trivial. We note however that 
a rather involved solution to this problem, which relies on  an excursion theory for the reflected Brownian motion developed in \cite{IW}, has appeared in \cite{SUW}.

The purpose of this note is to point out that the simple and elegant  argument in \cite{Hs1,Hs2} can be adapted to handle (\ref{gbvoundary}) above. The new technical input is the Feyman-Kac formula for differential forms established in \cite{dL}; see Theorem \ref{feyn-kac-form} below. This new formula clarifies the role played by the shape operator in the path integral representation of the heat semigroup associated to the Hodge Laplacian acting on differential forms satisfying absolute boundary conditions and allows us to easily carry out the ``fantastic cancellations'' leading to (\ref{gbvoundary}). In particular, the argument dispenses altogether with the modified Malliavin calculus used in \cite{SUW}. Satisfyingly enough, the method also gives the Gauss-Bonnet formula for $n$  odd, which actually reduces to a purely topological statement, namely,  
\begin{equation}
\label{gb-impar}
\chi(X)=\frac{1}{2}\chi(Z).
\end{equation}

The main results here are  Theorems  \ref{asymstr} and \ref{asymstr2} below, which establish the local Gauss-Bonnet formulae in the even and odd dimensional cases, respectively. Their proofs, which  are carried out  in Section \ref{proof-gb}, rely on a path integral representation for the Euler characteristic whose proof combines the Feynman-Kac formula, which is described in Section \ref{feyn-kac}, with   the so-called supersymmetric approach to index theory,  reviewed in Section \ref {supersetup}. We also collect in   Appendix \ref{appbridge} the technical facts on the reflected Brownian holonomy needed in the argument.

\vspace{0.3cm}
\noindent{\bf Acknowledgements:} Part of the research leading to this work was done while the author was visiting the Princeton University Mathematics Department (feb\-ru\-ary/2017). He would like to thank Prof. F. Marques for the invitation.

\section{The Feynman-Kac formula and its consequences}\label{feyn-kac} 

Here we review the Feynman-Kac formula proved in \cite{dL}; see also \cite{Hs3} for an important previous contribution. 
As usual, we resort to the  so-called Eells-Elworthy-Malliavin approach to stochastic analysis on manifolds, as exposed in \cite{Hs2}.

Let $(X,g)$ be a compact Riemannian manifold of dimension $n\geq 2$, which we assume to be oriented. We denote by $\nabla$ the Levi-Civita connection acting on tensor fields over $X$ and assume that  the (non-empty) boundary $Z$ is oriented  by the inward unit normal vector field $\nu$. For simplicity of notation, in the following we identify $p$-forms to $p$-vectors in the usual manner, relying on  the tacit assumption that all vector spaces in sight are endowed with  natural inner products induced by $g$.

\begin{definition}
	\label{nothomdefab}
We say that a (not necessarily homogeneous) differential form $\omega$ on $X$ satisfies {\em absolute boundary conditions} (or it is {\em absolute}, for simplicity) if there holds
\begin{equation}\label{neweq}
\nu\righthalfcup\omega=0,\quad \nu\righthalfcup d\omega=0,
\end{equation}
along $Z$. 
\end{definition}

The elementary identity $I=\nu\righthalfcup\nu\wedge+\nu\wedge\righthalfcup\nu$ induces an orthogonal decomposition
\begin{equation}\label{decomport}
	\wedge TX|_{Z}={\rm Ran}\, \nu\righthalfcup\nu\wedge\oplus\,{\rm Ran}\,\nu\wedge\righthalfcup\nu,
\end{equation}
corresponding to the standard splitting of $\omega$ into its tangential and normal components. We denote by $\Pi_{\rm tan}$ and $\Pi_{\rm nor}$ the corresponding orthogonal projections, respectively. We then say that a form $\omega$ is {\em tangential} if $\Pi_{\rm nor}\omega=0$. 
Thus, (\ref{neweq}) says that both $\omega$ and $d\omega$ are tangential. 

For our purposes, the differential condition in (\ref{neweq}) should  somehow be rephras\-ed in terms of the shape operator of $Z$. This requires an algebraic formalism that we now recall. 
If $E$ is a finite dimensional vector space and $B\in {\rm End}(E)$, we denote by $DB$ the natural extension of $B$ to ${\rm End}(\wedge E)$ as a derivation. Notice that $DB$ preserves degrees of forms and accordingly  we set $DB_p=DB|_{\wedge^p E}$. Also, if $S=T\otimes U\in{\rm End}(E)\otimes {\rm End}(E)$ we define $DS\in{\rm End}(\wedge E)$ by $DS=-DT\circ DU$, so that $DS_p=DS|_{\wedge^pE}$. 

We apply this first with $B=A$, where $A=-\nabla\nu$ is the shape operator of $Z$, which we extend to $TX$  by declaring that $A\nu=0$. 
As another instance of the formalism in action  take $S=R$, the curvature tensor of $(X,g)$. Then  the celebrated Weitzenb\"ock decomposition on $p$-forms reads as
\begin{equation}
\label{weitz}\Box_p =\Delta_p +DR_p,
\end{equation} 
where $\Box=(d+d^*)^2=dd^*+d^*d$ is the Hodge Laplacian, with $d^*$ being the $L^2$ adjoint of the exterior differential $d$,  and  $\Delta$ is the Bochner Laplacian. 

With this terminology at hand, the following reformulation of (\ref{neweq}) is crucial in our context. 

\begin{proposition}\cite[Proposition 5.1]{dL}
	\label{abscond2}
	A differential form $\omega$ is absolute if and only if 
	\begin{equation}
	\label{absbound3}
	\Pi_{\rm nor}\omega=0,\quad \Pi_{\rm tan}(\nabla_\nu-DA)\omega=0.
	\end{equation}
\end{proposition}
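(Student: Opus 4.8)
The plan is to localize both conditions in (\ref{neweq}) to a pointwise computation along $Z$ in an adapted orthonormal frame, using the torsion-freeness of $\nabla$ and the defining relation $A=-\nabla\nu$. First note that the equation $\nu\righthalfcup\omega=0$ is tautologically equivalent to $\Pi_{\rm nor}\omega=0$: by (\ref{decomport}) one has $\Pi_{\rm nor}=\nu\wedge\circ(\nu\righthalfcup)$, and $\nu\wedge$ is injective on forms killed by $\nu\righthalfcup$, of which $\nu\righthalfcup\omega$ is one. So from now on we may assume $\omega$ is tangential along $Z$, and the whole content of the proposition reduces to the claim that, under this hypothesis,
\[
\nu\righthalfcup d\omega=\Pi_{\rm tan}(\nabla_\nu-DA)\omega\qquad\text{along }Z.
\]
Once this identity is in hand the proposition follows immediately, since vanishing of the left side is then equivalent to vanishing of the right side.

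To prove the displayed identity, fix $q\in Z$ and choose a local orthonormal frame $\{e_i\}_{i=1}^n$ with $e_n=\nu$ and $e_1,\dots,e_{n-1}$ tangent to $Z$ along $Z$. Using the standard formula $d=\sum_i e_i\wedge\nabla_{e_i}$ for the exterior derivative in terms of the (torsion-free) Levi-Civita connection, with $p$-forms identified with $p$-vectors as in the text, we split $d\omega=\sum_{a=1}^{n-1}e_a\wedge\nabla_{e_a}\omega+\nu\wedge\nabla_\nu\omega$ and apply $\nu\righthalfcup$. The anti-derivation rule for the interior product, together with $\nu\righthalfcup e_a=\langle\nu,e_a\rangle=0$ for $a\le n-1$ and $\nu\righthalfcup\nu=1$, yields
\[
\nu\righthalfcup d\omega=-\sum_{a=1}^{n-1}e_a\wedge(\nu\righthalfcup\nabla_{e_a}\omega)+\nabla_\nu\omega-\Pi_{\rm nor}(\nabla_\nu\omega).
\]

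The shape operator enters through the following observation. Differentiating the relation $\nu\righthalfcup\omega=0$ along $Z$ in the tangential direction $e_a$, and using that $\righthalfcup$ is $\nabla$-parallel, gives $0=(\nabla_{e_a}\nu)\righthalfcup\omega+\nu\righthalfcup\nabla_{e_a}\omega$, whence $\nu\righthalfcup\nabla_{e_a}\omega=(Ae_a)\righthalfcup\omega$ because $\nabla_{e_a}\nu=-Ae_a$. Substituting this back, it remains to recognize that $\sum_{a=1}^{n-1}e_a\wedge\big((Ae_a)\righthalfcup\omega\big)=DA\,\omega$: expanding $Ae_a$ in the frame and invoking the symmetry of $A$ together with $A\nu=0$ rewrites the left-hand side as $\sum_i(Ae_i)\wedge(e_i\righthalfcup\omega)$, which is exactly the derivation $DA$ expressed in an orthonormal frame. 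Hence $\nu\righthalfcup d\omega=(\nabla_\nu-DA)\omega-\Pi_{\rm nor}(\nabla_\nu\omega)$; since $\nu\righthalfcup d\omega$ is automatically tangential (anything in the range of $\nu\righthalfcup$ is killed by $\nu\righthalfcup$), applying $\Pi_{\rm tan}$ to both sides and using $\Pi_{\rm tan}\Pi_{\rm nor}=0$ gives $\nu\righthalfcup d\omega=\Pi_{\rm tan}(\nabla_\nu-DA)\omega$, which is the claim.

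The argument involves no serious obstacle; the points demanding care are the consistent use of the frame formula $d=\sum_i e_i\wedge\nabla_{e_i}$ and the sign conventions in the anti-derivation identity for $\nu\righthalfcup$, and above all the algebraic reduction $\sum_a e_a\wedge\big((Ae_a)\righthalfcup\omega\big)=DA\,\omega$, which genuinely uses that $A$ is self-adjoint and annihilates $\nu$. One must also bear in mind that $\omega$ need only be defined near $Z$ and that the constraint $\nu\righthalfcup\omega=0$ is imposed on $Z$ alone, so it may be differentiated only in directions tangent to $Z$ — which is precisely why only the tangential frame vectors $e_1,\dots,e_{n-1}$ appear in the key step.
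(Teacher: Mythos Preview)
Your argument is correct. The paper itself does not supply a proof of this proposition; it merely cites \cite[Proposition 5.1]{dL} and records the statement, so there is nothing to compare against on the paper's side. Your self-contained computation---reducing the first condition to the tautology $\Pi_{\rm nor}\omega=\nu\wedge(\nu\righthalfcup\omega)$, then expanding $d=\sum_i e_i\wedge\nabla_{e_i}$ in an adapted frame, differentiating $\nu\righthalfcup\omega=0$ tangentially to convert $\nu\righthalfcup\nabla_{e_a}\omega$ into $(Ae_a)\righthalfcup\omega$, and finally using the self-adjointness of $A$ and $A\nu=0$ to identify $\sum_a e_a\wedge\big((Ae_a)\righthalfcup\omega\big)$ with the derivation $DA\,\omega$---is exactly the natural way to establish the result and is carried out without gaps. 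The only minor remark is that you could have observed directly that $DA\,\omega$ is tangential when $\omega$ is (since $A$ preserves $TZ$ and kills $\nu$), which makes the final projection step even more transparent; but your route via $\Pi_{\rm tan}\Pi_{\rm nor}=0$ is of course equivalent.
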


Thus,  absolute boundary conditions are of mixed type in the intermediate range $1\leq p\leq n-1$, that is, they are Dirichlet in  normal directions and Robin in tangential directions.  


Let $x_t$, $t\geq 0$, be the (normally) reflected Brownian motion on $X$ starting at some $x_0$. Recall that $x_t=\pi {\widetilde x}_t$, where $\pi:P_{\rm SO}(X)\to X$ is the principal bundle of oriented orthonormal frames and ${\widetilde x}_t$ is the {\em horizontal}\footnote{In what follows we use a tilde to indicate the horizontal lift of semimartingales to $P_{\rm SO}(X)$.} reflected Brownian motion starting at $\widetilde x_0$, whose anti-development is the standard Brownian motion $b_t$ in $\mathbb R^n$.
Formally, ${\widetilde x}_t$ satisfies the stochastic differential equation 
\begin{equation}
\label{stocut}
d{\widetilde x}_t=\sum_{i=1}^nH_i({\widetilde x}_t)\circ db^i_t+\nu^\dagger({\widetilde x}_t)d\lambda_t,
\end{equation}
where $\{H_i\}_{i=1}^n$ are the fundamental horizontal vector fields, the dagger means the standard equivariant lift (scalarization) of tensor fields on $X$ to $P_{{\rm SO}}(X)$ and 
$\lambda_t$ is the boundary local time associated to $x_t$. A crucial observation here is that $\lambda_t$ is a non-decreasing process which only increases when the path hits the boundary. 

As already mentioned, a naive application of It\^o's calculus fails to deliver a Feynman-Kac formula describing the action of the heat semigroup $e^{-\frac{1}{2}t\Box}$ on absolute differential forms, the reason being that this formalism turns out to be  unable to detect the orthogonal projections appearing in (\ref{absbound3}); see \cite[Section 5]{dL} for a detailed discussion of this point.  
To remedy this, we follow \cite{Hs3} and consider $M_{\epsilon,t}\in{\rm End}(\wedge\mathbb R^n)$ satisfying 
\begin{equation}\label{eqmf1}
dM_{\epsilon,t}+M_{\epsilon,t}\left(\frac{1}{2}DR^\dagger({\widetilde x}_t)dt+DA_{\epsilon}^\dagger({\widetilde x}_t) d\lambda_t\right)=0, \quad M_{\epsilon,0}=I,
\end{equation}
where $\epsilon>0$ and
\begin{equation}\label{pertA}
DA_{\epsilon}^\dagger=DA^\dagger+\epsilon^{-1}\Pi^\dagger_{\rm nor}. 
\end{equation}
Note that 
\begin{equation}
\label{redAform}
DA_{\epsilon}^\dagger\omega=DA^\dagger\omega,
\end{equation}
in case $\omega$ is absolute.
It is known that, as $\epsilon\to 0$, $M_{\epsilon,t}$ converges in $L^2$ to an adapted, right-continuous multiplicative functional $M_t$ with left limits satisfying
\begin{equation}
\label{heateq}
M_t\Pi_{\rm nor}^\dagger(\widetilde x_t)=0, \quad \widetilde x_t\in\pi^{-1}(Z);
\end{equation} 
see \cite{Hs3,dL}. 

Now, let $\omega_0$ be an absolute form, so that \begin{equation}\label{fundsol0}
\omega_t=e^{-\frac{1}{2}t\Box}\omega_0
\end{equation}
 is the solution to the initial-boundary value problem
\begin{equation}\label{fundsol}
\frac{\partial \omega_t}{\partial t}+\frac{1}{2}\Box\omega_t=0, \quad \lim_{t\to 0}\omega_t=\omega_0,\quad \nu\righthalfcup\omega_t=0,\quad \nu\righthalfcup d\omega_t=0.  
\end{equation}
Then a simple application of It\^o's formula to the process $M_{\epsilon,t}\omega^\dagger_{T-t}({\widetilde x}^t)$, $0\leq t\leq T$,  confirms that, in the limit $\epsilon\to 0$, It\^o's calculus is compatible with the mixed nature of the boundary conditions in (\ref{absbound3}). In this way the following fundamental Feynman-Kac formula  is verified. 

\begin{theorem}
	\label{feyn-kac-form}\cite[Theorem 5.2]{dL}
	Under the conditions above, 
	\begin{equation}
	\label{feyn-kac2}
	\omega^\dagger_t({\widetilde x}_0)=\mathbb E_{{\widetilde x}_0}(M_t\omega^\dagger_0({\widetilde x}_t)).
	\end{equation}
	Equivalently, 
	\begin{equation}
	\label{feyn-kac3}
	\omega_t(x_0)=\mathbb E_{x_0}(M_tV_t\omega_0(x_t)),
	\end{equation}
	where $V_t=U^{-1}_t$ and $U_t$ is the stochastic parallel transport acting on differential forms.    
\end{theorem}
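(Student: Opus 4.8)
The plan is to follow the Eells--Elworthy--Malliavin route to a Feynman--Kac formula, the one genuinely new ingredient being the treatment of the boundary local time. Fix $T>0$ and consider, for $0\le t\le T$, the $\wedge\mathbb R^n$--valued semimartingale $N^\epsilon_t:=M_{\epsilon,t}\,\omega^\dagger_{T-t}(\widetilde x_t)$. First I would apply It\^o's formula to $N^\epsilon_t$ along the horizontal reflected Brownian motion (\ref{stocut}), writing $F(s,u)=\omega^\dagger_{T-s}(u)$. The Stratonovich contributions coming from $\widetilde x_t$ convert in the usual way: the $\circ\,db^i_t$ terms give a local martingale $\sum_i(\nabla_{e_i}\omega_{T-t})^\dagger(\widetilde x_t)\,db^i_t$ together with the horizontal second--order correction $\tfrac12\sum_iH_i^2F\,dt$ (which realizes $-\tfrac12(\Delta\omega_{T-t})^\dagger\,dt$, $\Delta$ being the Bochner Laplacian of (\ref{weitz})), the $d\lambda_t$ term gives $(\nabla_\nu\omega_{T-t})^\dagger(\widetilde x_t)\,d\lambda_t$ (differentiation along the horizontal lift of the inward normal), and the explicit time dependence contributes $\partial_sF\,dt=\tfrac12(\Box\omega_{T-t})^\dagger\,dt$ by the heat equation in (\ref{fundsol}). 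Combining the second--order correction with this last term and invoking the Weitzenb\"ock decomposition (\ref{weitz}), the $dt$--drift of $\omega^\dagger_{T-t}(\widetilde x_t)$ collapses to $\tfrac12DR^\dagger(\widetilde x_t)\,\omega^\dagger_{T-t}(\widetilde x_t)\,dt$.

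Next I would apply the product rule to $N^\epsilon_t=M_{\epsilon,t}F(t,\widetilde x_t)$ using the defining ODE (\ref{eqmf1}); since $M_{\epsilon,t}$ has finite variation there is no cross--bracket term, the $dt$--drift cancels exactly against the $\tfrac12DR^\dagger$ term of (\ref{eqmf1}), and one is left with
\[
dN^\epsilon_t=M_{\epsilon,t}\sum_i(\nabla_{e_i}\omega_{T-t})^\dagger(\widetilde x_t)\,db^i_t+M_{\epsilon,t}\big[(\nabla_\nu-DA_\epsilon)\omega_{T-t}\big]^\dagger(\widetilde x_t)\,d\lambda_t .
\]
Now for the crucial cancellations in the $d\lambda_t$ term. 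Since $\omega_{T-t}$ is absolute (it solves (\ref{fundsol})), (\ref{redAform}) gives $DA_\epsilon^\dagger\omega^\dagger_{T-t}=DA^\dagger\omega^\dagger_{T-t}$, and Proposition \ref{abscond2} gives $\Pi_{\rm tan}(\nabla_\nu-DA)\omega_{T-t}=0$; hence the $d\lambda_t$ integrand reduces to $M_{\epsilon,t}\,\Pi^\dagger_{\rm nor}(\widetilde x_t)\big[(\nabla_\nu-DA)\omega_{T-t}\big]^\dagger(\widetilde x_t)$. Integrating on $[0,T]$ and taking $\mathbb E_{\widetilde x_0}$ (the martingale term is a genuine martingale by compactness of $X$ and exponential integrability of $\lambda_T$) then yields, for each fixed $\epsilon>0$,
\[
\mathbb E_{\widetilde x_0}\big(M_{\epsilon,T}\,\omega^\dagger_0(\widetilde x_T)\big)=\omega^\dagger_T(\widetilde x_0)+\mathbb E_{\widetilde x_0}\!\int_0^T M_{\epsilon,t}\,\Pi^\dagger_{\rm nor}(\widetilde x_t)\big[(\nabla_\nu-DA)\omega_{T-t}\big]^\dagger(\widetilde x_t)\,d\lambda_t .
\]

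Finally I would let $\epsilon\to0$. The left--hand side converges to $\mathbb E_{\widetilde x_0}(M_T\,\omega^\dagger_0(\widetilde x_T))$ since $M_{\epsilon,T}\to M_T$ in $L^2$ and $\omega_0$ is bounded; the error term on the right converges to $\mathbb E_{\widetilde x_0}\int_0^T M_t\,\Pi^\dagger_{\rm nor}(\widetilde x_t)\big[(\nabla_\nu-DA)\omega_{T-t}\big]^\dagger(\widetilde x_t)\,d\lambda_t$, which \emph{vanishes}: the measure $d\lambda_t$ charges only the set $\{t:\widetilde x_t\in\pi^{-1}(Z)\}$, where $M_t\Pi^\dagger_{\rm nor}(\widetilde x_t)=0$ by (\ref{heateq}). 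This proves (\ref{feyn-kac2}), and (\ref{feyn-kac3}) follows by unwinding the scalarization via the stochastic parallel transport, writing $\omega^\dagger(\widetilde x_t)=V_t\,\omega(x_t)$ with $V_t=U_t^{-1}$ and evaluating the resulting identity at the base point $x_0$.

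I expect the main obstacle to be precisely this passage to the limit in the boundary--local--time term. Applied directly to $M_t\,\omega^\dagger_{T-t}(\widetilde x_t)$, It\^o's calculus produces a $d\lambda_t$ contribution it cannot evaluate, being insensitive to the orthogonal projections $\Pi_{\rm nor},\Pi_{\rm tan}$ hidden in the mixed boundary condition (\ref{absbound3}); the sole purpose of the regularized functional $M_{\epsilon,t}$, with the penalty $\epsilon^{-1}\Pi^\dagger_{\rm nor}$ of (\ref{pertA}), is to render these projections visible --- the $\epsilon^{-1}$ blow--up is annihilated by $\Pi_{\rm nor}\omega_{T-t}=0$ (the content of (\ref{redAform})), while its limiting effect is the constraint $M_t\Pi^\dagger_{\rm nor}=0$ on the boundary, which is exactly what disposes of the surviving normal part $\Pi_{\rm nor}(\nabla_\nu-DA)\omega_{T-t}$. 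The remaining matters --- that $N^\epsilon_t$ is a true martingale, that $\omega_t$ is smooth up to $t=0$ and up to $Z$ (using that $\omega_0$ is already absolute, so the parabolic compatibility condition holds), and that one may interchange $\lim_{\epsilon\to0}$ with $\mathbb E_{\widetilde x_0}$ (using the $L^2$--convergence $M_{\epsilon,t}\to M_t$, finiteness of $\mathbb E_{\widetilde x_0}[\lambda_T^2]$, and boundedness of $\omega_0$ and $\nabla\omega_0$) --- are routine.
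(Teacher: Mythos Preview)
Your proposal is correct and follows exactly the approach sketched in the paper (which only gives a one-sentence outline and defers the details to \cite{dL}): apply It\^o's formula to $M_{\epsilon,t}\,\omega^\dagger_{T-t}(\widetilde x_t)$, use Weitzenb\"ock to cancel the $dt$-drift, use Proposition \ref{abscond2} and (\ref{redAform}) to reduce the $d\lambda_t$ term to its normal part, and then pass to the limit $\epsilon\to 0$ where (\ref{heateq}) kills that remaining term. Your identification of the key mechanism --- that the penalty $\epsilon^{-1}\Pi^\dagger_{\rm nor}$ disappears on absolute forms while forcing $M_t\Pi^\dagger_{\rm nor}=0$ in the limit --- is precisely the point the paper emphasizes.
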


Now if 
\[
K(t;x,y)=\sum_{p=0}^nK_p(t;x,y)\in{\rm End}(\wedge T_yX,\wedge T_xX)
\] 
is the heat kernel, i.e. the fundamental solution of (\ref{fundsol}), we also have 
\begin{equation}
\label{heatkernel}
\omega_t(x)=\int_XK(t;x,y)\omega_0(y)dX_y. 
\end{equation} 
In order to compare this with the path integral representation in (\ref{feyn-kac3}), we consider $w_{s;x,y}$, $0\leq s\leq t$, the reflected Brownian bridge from $x$ to $y$ with lifetime $t$, which is discussed in detail in Appendix \ref{appbridge}. It then follows that 
\[
\label{compat}K(t;x,y)=K_0(t;x,y)\mathbb E_{t;x,y}(M_tV_t), 
\] 
where $K_0$ is the Neumann heat kernel acting on functions, $V_t$ is viewed under $\mathbb P_{t;x,y}$, the law of $w_{s;x,y}$, and $\mathbb E_{t;x,y}$ is the corresponding expectation. In particular, along the diagonal we have 
\begin{equation}
\label{compat}K(t;x,x)=K_0(t;x,x)\mathbb E_{t;x,x}(M_tV_t), 
\end{equation} 
where here $V_t$ is the  reflected Brownian holonomy along (reversed) loops pinned  at $x$.

\begin{remark}
	\label{hist}{\rm At least for $1$-forms, a path integral representation for solutions of (\ref{fundsol}) already appears in \cite[Chapter 6]{IW}, where the result is first established for a half-space and then patched together in the usual manner in order to obtain the representation in the general case. The idea of using the discontinuous multiplicative functional $M_{t}$ to address the problem, again for $1$-forms, is due to Hsu \cite{Hs3}. Besides leading to a Feynman-Kac formula in which  the curvature operators $DR$ and $DA$ play  similar roles, this represented a major simplification in comparison with the procedure in \cite{IW}. This approach has been shown to work fine for forms of any degree in \cite{dL}, thus yielding Theorem \ref{feyn-kac-form}.}
\end{remark}

\section{The supersymmetric approach to Gauss-Bonnet}\label{supersetup}

Similarly to the arguments in \cite{G2,SUW}, the initial step in our proof of (\ref{gbvoundary}) makes use of the so-called supersymmetric approach to index theory. We include here  a brief description of this formalism in our setting; the closed case is covered in detail in  \cite{CFKS,Hs2,Ro}.

Let
\[
\mathcal A^{\bullet}(X)=\bigoplus_{p=0}^n\mathcal A^p(X)
\]
be the space of smooth differential forms on $X$. 
Notice that 
\begin{equation}
\label{decomp}
\mathcal A^\bullet(X)=\mathcal A^+(X)\oplus \mathcal A^-(X), 
\end{equation}
where 
\[
\mathcal A^\pm(X)=\bigoplus_{(-1)^p=\pm 1} \mathcal A^p(X),
\] 
which introduces a $\mathbb Z_2$-grading on $\mathcal A^\bullet(X)$ associated to 
the parity operator $\varepsilon$ given by
\[
\varepsilon|_{\mathcal A^\pm(X)}=\pm{\rm Id}|_{\mathcal A^\pm(X)}.
\]

If we impose absolute boundary conditions, the Hodge Laplacian $\Box$ defines a well-posed elliptic boundary value problem \cite{S}, so we may consider the heat semigroup $e^{-\frac{1}{2}t\Box}$ appearing in 
(\ref{fundsol0})-(\ref{fundsol}). 
We have the corresponding 
spectral 
decomposition
\[
L^2\mathcal A^\bullet(X)=\mathcal H_{0,a}\bigoplus\oplus_{\mu_i> 0}\mathcal H_{\mu_i,a},
\]
where $\mathcal H_{\mu,a}=\{\omega\in \mathcal A^\bullet_a(X); \Box\omega=\mu\omega\}$
and the subscript $a$ indicates that absolute boundary conditions are required. By Hodge-de Rham theory we then have 
\begin{equation}\label{eulerexp}
\chi(X)=\dim\mathcal H^+_{0,a}-\dim\mathcal H^-_{0,a},
\end{equation}
where $\mathcal H_{\mu_i,a}^\pm=\mathcal H_{\mu_i,a}\cap\mathcal A^\pm(X)$;
see \cite{S}. 

From (\ref{heatkernel}) we know that $e^{-\frac{1}{2}t\Box}$ is trace class, so we may consider the {\em Witten index} $W_t={\rm Str}\,e^{-\frac{1}{2}t\Box}$, where ${\rm Str}={\rm trace}\circ\varepsilon$. Clearly,
\begin{equation}
\label{strexp}
W_t=\sum_{\mu_i\geq 0}e^{-\frac{1}{2}t\mu_i}\left({\rm dim}\,\mathcal H_{\mu_i,a}^+-{\rm dim}\,\mathcal H_{\mu_i,a}^-\right).
\end{equation}
Now 
define the Dirac-type operators $D=d+d^*:\mathcal A^\bullet(X)\to \mathcal A^\bullet(X)$ and 
\[
D_\pm=D|_{\mathcal A^\pm(X)}:\mathcal A^\pm(X)\to \mathcal A^\mp(X). 
\] 
With respect to the grading (\ref{decomp}),
\[
\Box=D^2=\left(
\begin{array}{cc}
\Box_+ &  0\\
0  & \Box_-
\end{array}
\right),\quad \Box_\pm=D_\mp D_\pm.
\]

\begin{proposition}
	\label{welldef}
	If $\mu_i>0$ then $D_{\pm}:\mathcal H_{\mu_i,a}^\pm\to \mathcal H_{\mu_i,a}^\mp$ are isomorphisms.
\end{proposition}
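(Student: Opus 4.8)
The plan is to run the standard supersymmetric argument; the only real subtlety is the compatibility of $D=d+d^{*}$ with the absolute boundary conditions. Since $d$ raises and $d^{*}$ lowers the degree of a form by one, $D$ anticommutes with the parity operator $\varepsilon$ and therefore carries $\mathcal A^{+}(X)$ to $\mathcal A^{-}(X)$ and conversely; and since $D$ commutes with $\Box=D^{2}$, it preserves eigenvalues, i.e. it sends a form with $\Box\omega=\mu_{i}\omega$ to another such form. Granting for the moment that it also preserves absolute boundary conditions, we obtain well-defined linear maps $D_{\pm}\colon\mathcal H^{\pm}_{\mu_{i},a}\to\mathcal H^{\mp}_{\mu_{i},a}$, and then the identities $D_{-}D_{+}=\Box_{+}=\mu_{i}\,\mathrm{Id}$ on $\mathcal H^{+}_{\mu_{i},a}$ and $D_{+}D_{-}=\Box_{-}=\mu_{i}\,\mathrm{Id}$ on $\mathcal H^{-}_{\mu_{i},a}$, valid because $\Box$ acts as $\mu_{i}$ on these eigenspaces, show at once that $\mu_{i}^{-1}D_{\mp}$ is a two-sided inverse for $D_{\pm}$ (this is where $\mu_{i}>0$ enters). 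So the whole proposition reduces to a single point.

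That point is: if $\omega$ satisfies $\nu\righthalfcup\omega=0$ and $\nu\righthalfcup d\omega=0$ along $Z$ and in addition $\Box\omega=\mu_{i}\omega$, then $D\omega=d\omega+d^{*}\omega$ again satisfies the absolute boundary conditions. Since these conditions are linear, it suffices to check separately that $d\omega$ and $d^{*}\omega$ are absolute. For $d\omega$ this is immediate: $\nu\righthalfcup d\omega=0$ is part of the hypothesis, and $\nu\righthalfcup d(d\omega)=0$ trivially. For $d^{*}\omega$ one must verify $\nu\righthalfcup d^{*}\omega=0$ and $\nu\righthalfcup d(d^{*}\omega)=0$ along $Z$. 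I would obtain the first from $\nu\righthalfcup\omega=0$ by a local computation in an orthonormal frame adapted to $Z$: expanding $d^{*}\omega=-\sum_{i}e_{i}\righthalfcup\nabla_{e_{i}}\omega$ and commuting $\nu\righthalfcup$ past $\nabla$ produces, besides tangential derivatives of $\nu\righthalfcup\omega$ (which vanish on $Z$), terms of the form $e_{i}\righthalfcup(Ae_{i})\righthalfcup\omega$ built from the shape operator $A$ of $Z$, and these cancel upon summation by antisymmetry of the contractions --- this is precisely the circle of ideas underlying Proposition \ref{abscond2}. The second condition then follows from the eigen-equation: writing $d(d^{*}\omega)=\Box\omega-d^{*}d\omega=\mu_{i}\omega-d^{*}(d\omega)$ we get $\nu\righthalfcup d(d^{*}\omega)=\mu_{i}(\nu\righthalfcup\omega)-\nu\righthalfcup d^{*}(d\omega)=0$, the last contraction vanishing by applying the first step to the absolute form $d\omega$.

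I expect the main obstacle to be exactly this boundary bookkeeping --- the fact that $\nu\righthalfcup\omega=0$ forces $\nu\righthalfcup d^{*}\omega=0$, which is where the extrinsic geometry of $Z$ enters and which is the manifold-with-boundary analogue of the trivial observation that on a closed manifold $d+d^{*}$ commutes with $\Box$ and hence preserves its eigenspaces. One can also sidestep the computation altogether by invoking the standard Hodge theory of the de Rham complex with absolute boundary conditions, cf. \cite{S} and the treatments in \cite{CFKS,Hs2,Ro}. Once the $D$-stability of absolute boundary conditions is established, the remainder is the purely algebraic argument indicated in the first paragraph.
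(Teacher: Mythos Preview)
Your proof is correct and follows essentially the same route as the paper: first verify that $D$ preserves absolute boundary conditions on eigenforms, then use $D_{\mp}D_{\pm}=\mu_i\,\mathrm{Id}$ on $\mathcal H^{\pm}_{\mu_i,a}$ to conclude. The only notable difference is packaging: the paper invokes the ready-made identity $\Pi_{\rm nor}d^{*}=d^{*}\Pi_{\rm nor}$ from \cite{S} (so $\Pi_{\rm nor}D\omega=\Pi_{\rm nor}d\omega+d^{*}\Pi_{\rm nor}\omega=0$ and $\Pi_{\rm nor}dD\omega=\mu_i\Pi_{\rm nor}\omega-d^{*}\Pi_{\rm nor}d\omega=0$ in two lines), whereas you unpack that identity via a local frame computation and the symmetry of $A$---your observation that $\sum_i e_i\righthalfcup(Ae_i)\righthalfcup\omega$ vanishes because a symmetric matrix is contracted against an antisymmetric double interior product is exactly the content of $\Pi_{\rm nor}d^{*}=d^{*}\Pi_{\rm nor}$.
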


\begin{proof}
	First we need to check that these operators are well defined.  We first show that they preserve absolute boundary conditions. 
	We shall use that  $\Pi_{\rm tan}d=d\Pi_{\rm tan}$, $\Pi_{\rm nor}d^*=d^*\Pi_{\rm nor}$ and that the Hodge star operator $\star$ intertwines the projections, that is, $\Pi_{\rm nor}\star=\star\Pi_{\rm tan}$ and $\Pi_{\rm tan}\star=\star\Pi_{\rm nor}$; see \cite[Proposition 1.2.6]{S}. Clearly, it suffices to prove the property for $D$. Let us take $\omega\in \mathcal H_{\mu_i,a}$, so that  $\Pi_{\rm nor}\omega=0$, $\Pi_{\rm nor}d\omega=0$ and $\Box\omega=\mu_i\omega$.  We have  
	\begin{eqnarray*}
	\Pi_{\rm nor} D\omega & = & \Pi_{\rm nor}d\omega + \Pi_{\rm nor}d^*\omega\\
	& = & d^* \Pi_{\rm nor}\omega\\
	& = & 0, 
	\end{eqnarray*} 
and similarly,
\begin{eqnarray*}
\Pi_{\rm nor}dD\omega & = & \Pi_{\rm nor}d^2\omega + \Pi_{\rm nor}dd^*\omega\\
& = & \mu_i\Pi_{\rm nor}\omega -\Pi_{\rm nor}d^*d\omega \\
& = & - d^*\Pi_{\rm nor}   d\omega\\
& = & 0, 
\end{eqnarray*}
which shows that $D\omega$ is absolute indeed.

Now
take  $\varphi\in\mathcal H_{\mu_i,a}^+$, $\varphi\neq 0$, for some $\mu_i>0$. Thus, 
$D_-D_+\varphi=\Box_+\varphi=\mu_i\varphi$ and $D_+\varphi=0$ implies $\varphi=0$, a contradiction. Thus, $D_+\varphi\neq 0$ lies in $\mathcal A^-(X)$ and 
\[
\Box_- D_+\varphi=D_+D_-D_+\varphi=D_+(\Box_+\varphi)=\mu_i D_+\varphi,
\]
so that $D_+\varphi\in \mathcal H_{\mu_i,a}^-$. This shows that $D_+:\mathcal H_{\mu_i,a}^+\to \mathcal H_{\mu_i,a}^-$ is injective. On the other hand, if $\psi\in \mathcal H_{\mu_i,a}^-$, $\psi\neq 0$, then by the previous argument we know that $D_-\psi\in \mathcal H_{\mu_i,a}^+$ is non-zero and $D_+(\mu_i^{-1}D_-\psi)=\mu_i^{-1}\Box_-\psi=\psi$, 
showing that $D_+:\mathcal H_{\mu_i,a}^+\to \mathcal H_{\mu_i,a}^-$ is surjective. 
A similar argument shows that $D_-:\mathcal H_{\mu_i,a}^-\to \mathcal H_{\mu_i,a}^+$ is an isomorphism as well.
\end{proof}

By (\ref{eulerexp}) and (\ref{strexp}), this pairwise cancellation of eigenspaces in positive energy levels implies that $W_t$ actually does not depend on $t$. In fact,
$
W_t=\chi(X)
$. 
Combining this with the 
heat kernel representation (\ref{heatkernel}) we finally obtain the 
celebrated {\em McKean-Singer formula} for the Euler characteristic: 
\begin{equation}\label{heatchi}
\chi(X)=\int_X{\rm Str}\,K(t;x,x)\,dX_x, \quad t>0.
\end{equation}

Just to put our contribution in its proper perspective, we now say a few words about the conventional, non-probabilistic approach to Guass-Bonnet via (\ref{heatchi}). This relies on the prospect that as $t\to 0$ the integral in the right-hand side above splits as a sum of an integral over $X$ and another one over a tiny collar neighborhood of $Z$, so that  (\ref{gbvoundary}) is retrieved in the limit. Indeed, from general principles \cite{Gre} it is known that as $t\to 0$ the trace of the heat kernel $K^\pm$ of $e^{-\frac{1}{2}t\Box_\pm}$  admits the asymptotic expansion 
\[
{\rm trace}\,K^\pm(t;x,x)\sim \sum_{d\geq 0}\left(\left(A^\pm_{d}(x)+B^\pm_{d}(x)\right)\right)t^{-n/2+d/2},
\] 
where $B^\pm_{0}\equiv 0$ and for $d\geq 1$, $B^\pm_{d}$ localizes around $Z$ in the sense that it dies out exponentially as $t\to 0$ outside any given  neighborhood of the boundary. 
It follows from (\ref{heatchi}) that
\[
\chi(X)=\int_X\left(A^+_{n}(x)-A^-_n(x)\right)dX_x+\int_{Z}\left(
B^+_{n}(x)-B^-_n(x)\right)
dZ_x.
\]
Thus, the usual heat equation proof 
of the Gauss-Bonnet formula ultimately reduces to checking that the integrands above can be identified to the respective Gauss-Bonnet integrands in (\ref{gbvoundary}). These are the ``fantastic cancellations'' mentioned in \cite{MS}, a terminology justified by the fact that the coeffients $A^\pm_{d}$ and $B^\pm_{d}$ in the heat trace expansions above depend in principle on higher order derivatives of the metric whereas the Gauss-Bonnet integrand only involves derivatives up to second order. Nonetheless, a proof of (\ref{gbvoundary}) has been carried out along these lines in \cite{G2}. In fact, it is also proved there that  
\[
A^+_{d}-A^-_d=0, \quad  B^+_{d}-B^-_d=0, \qquad 0\leq d<n,
\]
which then implies that the integrand in (\ref{heatchi}) converges, as $t\to 0$, to the Gauss-Bonnet integrands. This is the celebrated {\em local} Gauss-Bonnet formula. We shall prove similar results in Theorem \ref{asymstr} and \ref{asymstr2} below, 
but here we shall take an alternate route dictated by our usage of stochastic machinery. More precisely, instead of relying on (\ref{heatchi}), our proof of the local Gauss-Bonnet formula confirms that the integrand in the path integral representation for the Euler characteristic in (\ref{witten}) below converges as $t\to 0$ to the Gauss-Bonnet integrands  in (\ref{gbvoundary}).

\section{The proof of the Gauss-Bonnet formula}\label{proof-gb}

We now outline our proof of the Gauss-Bonnet formula (\ref{gbvoundary}). 
Of course we adhere here to the probabilistic tenet and instead of using (\ref{heatchi}) as in \cite{G2} we follow \cite{Hs2} and appeal 
to (\ref{compat}) in order to obtain a path integral representation for the Euler characteristic, namely, 
\begin{equation}\label{witten}
\chi(X)=\int_XK_0(t;x,x)\,\mathbb E_{t;x,x}\, {\rm Str}\,(M_tV_t)\,dX_x, \quad t>0.
\end{equation}
Roughly speaking, the strategy consists in  
first showing that, as $t\to 0$, the quantity  $\mathbb E_{t;x,x}{\rm Str}\,M_tV_t\,dX$ admits a {\em polynomial} expansion in powers of $t^{1/2}$ (of course this becomes an expansion in powers of $t$ in the closed case, as already explained in \cite{Hs2}, so the half-integer powers arise because of the boundary). 
Taking into account the well-known fact that $K_0(t;x,x)\sim(2\pi t)^{-n/2}$ as $t\to 0$, we next check that in this expansion only the terms involving the power $t^{n/2}$ survive in the asymptotic limit and moreover that the corresponding coefficients yield the expected Gauss-Bonnet integrands. As we shall see below, these cancellations follow rather straightforwardly from the specific differential equation (\ref{eqmf1}) satisfied by the multiplicative functional $M_{\epsilon,t}$ and standard estimates on the reflected Brownian holonomy $V_t$.     
This line of thought will eventually lead to the following remarkable result, first considered in \cite{G2} in connection with (\ref{heatchi}) above; see also \cite{SUW}.
 
\begin{theorem}
	\label{asymstr}(Local Gauss-Bonnet with boundary in even dimension) If $n\geq 2$ is even then  
	\begin{eqnarray}\label{asymstrint}
	\lim_{t\to 0}\,(2\pi t)^{-n/2}\mathbb E_{t;x,x}{\rm Str}\,M_tV_tdX 
	 & = &  b_n{\rm Str}\,DR^{n/2}dX\nonumber\\
	& & \quad +\left(\sum_{2k+l=n-1}b_{n,k,l}{\rm Str}\,DR^{k}DA^l\right)dZ,
	\end{eqnarray} 
	for some universal constants $b_n$ and $b_{n,k,l}$.
\end{theorem}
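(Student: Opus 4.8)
The strategy is to extract a short-time asymptotic expansion of the integrand $\mathbb E_{t;x,x}{\rm Str}\,(M_tV_t)\,dX$ directly from the defining SDE \eqref{eqmf1} for the multiplicative functional together with known estimates for the reflected Brownian holonomy $V_t$ along pinned loops (collected in Appendix \ref{appbridge}). First I would solve \eqref{eqmf1} by Peano iteration, writing $M_t$ as a series of iterated integrals in $dt$ and $d\lambda_t$ whose integrands are products of $\tfrac12 DR^\dagger$-factors and $DA^\dagger$-factors evaluated along $\widetilde x_s$; in the limit $\epsilon\to 0$ the singular term $\epsilon^{-1}\Pi_{\rm nor}^\dagger$ drops out by \eqref{redAform}--\eqref{heateq} since everything in sight is evaluated against absolute data, so only $DR$ and $DA$ remain. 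Substituting this series into ${\rm Str}\,(M_tV_t)$ and using the Brownian-bridge scaling $w_{s;x,x}\approx x+\sqrt t\,(\text{Brownian bridge})$ one finds that each iterated integral contributes a definite power $t^{k+l/2}$, where $k$ counts the $DR$-insertions (each carrying a $dt\sim t$) and $l$ counts the $DA$-insertions (each carrying a $d\lambda_t\sim \sqrt t$, by the standard boundary–local-time scaling for reflected Brownian bridges pinned near the boundary); the holonomy factor $V_t=I+O(\sqrt t)$ contributes only lower-order corrections once combined with the supertrace.

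The decisive algebraic input is the supertrace lemma of Patodi–Mathai–Quillen type: on $\wedge\mathbb R^n$ with $n$ even one has ${\rm Str}\,DB_1\cdots DB_m=0$ for any endomorphisms $B_j$ unless $m=n$, in which case the supertrace is (a universal multiple of) the Pfaffian-type pairing of $B_1,\dots,B_n$. Applied here with each $B_j$ equal to either a curvature two-form block $R$ or a shape-operator block $A$, this forces the interior term to be purely $b_n{\rm Str}\,DR^{n/2}$ (the only combination with total "form degree" $n$ built from $DR$ alone, contributing $t^{n/2}$) and the boundary term to be a sum over $2k+l=n-1$ of $b_{n,k,l}{\rm Str}\,DR^kDA^l$ (form degree $2k+l=n-1$ on the boundary $Z$, again contributing exactly $t^{(2k+l+1)/2}=t^{n/2}$ after accounting for the extra $\sqrt t$ from one boundary hit converting the $n$-dimensional Gaussian to an $(n-1)$-dimensional one along $Z$). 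All terms with fewer insertions vanish by the supertrace lemma, and terms with more insertions are higher order in $t$; after dividing by $(2\pi t)^{-n/2}$ the $t$-powers match and the limit exists, giving \eqref{asymstrint} with universal constants determined by the combinatorial coefficients in the Peano series and the local-time moments.

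The main obstacle I anticipate is controlling the boundary local time $\lambda_t$ under the pinned measure $\mathbb P_{t;x,x}$ with enough precision: one needs (i) that $\mathbb E_{t;x,x}[\lambda_t^l]\sim c_l\, t^{l/2}$ with the correct constants $c_l$ when $x\in Z$ and exponentially small moments when $x$ is at fixed positive distance from $Z$ (so the boundary term genuinely localizes on $Z$), and (ii) uniform integrability so that the formal term-by-term expansion of the Peano series can be legitimately interchanged with $\mathbb E_{t;x,x}$ and with $\lim_{t\to 0}$. These are exactly the reflected-Brownian-bridge estimates deferred to Appendix \ref{appbridge}; granting them, together with the $L^2$-convergence $M_{\epsilon,t}\to M_t$ and the holonomy bound $\|V_t-I\|=O(\sqrt t)$, the remaining work is the bookkeeping of which iterated integrals survive the supertrace, which is where the "fantastic cancellations" manifest themselves transparently in this framework.
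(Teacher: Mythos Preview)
Your overall strategy---Peano iteration of \eqref{eqmf1}, counting powers of $t$ and $\sqrt t$ from $dt$ and $d\lambda_t$, then applying Berezin--Patodi to kill the low-order terms---is exactly the paper's approach. There is, however, a genuine gap in your treatment of the holonomy. You write $V_t=I+O(\sqrt t)$ and claim the correction is lower order ``once combined with the supertrace,'' but this is not enough: writing $V_t=e^{Dv_t}$ and expanding, a term $\mathcal M_i(t)\,Dv_t^j$ has supertrace zero only when $2i+j<n$, and with merely $|v_t|=O(\sqrt t)$ its expected size is $O(t^{i+j/2})$. The surviving terms $2i+j=n$ then \emph{all} contribute at the critical order $t^{n/2}$, so you cannot isolate $i=n/2$, $j=0$. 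What the argument actually needs (and what Proposition \ref{pinnedest} provides) is the sharper pinned-loop estimate $\mathbb E_{t;x,x}|v_t|^N\le Ct^N$, i.e.\ $v_t=O(t)$; then $\mathcal M_i Dv_t^j=O(t^{i+j})$, and combining $2i+j\ge n$ with $i+j\le n/2$ forces $j=0$. The same correction is needed on the boundary, where one must pair $2|I|+j\ge n-1$ with $|I|+j+\tfrac12\le n/2$.

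A second point that deserves to be made precise is the source of the extra $\sqrt t$ in the boundary term. Your phrasing about ``one boundary hit converting the $n$-dimensional Gaussian to an $(n-1)$-dimensional one'' is suggestive but not a proof; the paper's mechanism is to write $dX|_{B_\rho(Z)}=\zeta\wedge dZ$ in a collar and use $\mathbb E_{t;x,x}|\zeta(t)|=O(t^{1/2})$ for the normal displacement of the pinned loop (a consequence of \eqref{distest}). This is what upgrades the remainder $\xi(t)$ from $O(t^{n/2})$ to $O(t^{n/2+1/2})$ once multiplied by $\zeta$, and is also what makes the Berezin--Patodi threshold drop from $n$ to $n-1$ (the endomorphisms now act on tangential, i.e.\ absolute, forms). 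With these two fixes your outline becomes the paper's proof.
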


We first explain how the bulk term $b_n{\rm Str}\,DR^{n/2}$
shows up in the asymptotic limit. 
Observe that $d\lambda_t=0$ on $X\backslash Z$ so that there (\ref{eqmf1})  reduces to
\[
dM_t+\frac{1}{2}M_tDR^\dagger(\widetilde x_t)dt=0, 
\]
with no dependence whatsoever on $\epsilon$.
Thus we may assume that $Z$ is empty, so the reasoning in \cite{Hs2} applies. Of course this is just a manifestation of the well-known ``principle of not feeling the boundary'', formally justified by means of Proposition \ref{notfeel1}, which guarantees that, up to a $\mathbb P_{t;x,x}$-negligible set, the Brownian loop pinned at $x\in M\backslash Z$ remains uniformly away from $Z$ as $t\to 0$. In any case, we present here  the argument for the sake of completeness. By iterating 
\[
M_t=I-\frac{1}{2}\int_0^t M_\tau DR^\dagger(\widetilde x_\tau)\,d\tau
\]
we find that 
\begin{equation}
\label{iterint}
M_t=\sum_{i\leq n/2}\frac{(-1)^i}{2^i}\mathcal M_i(t)+O(t^{n/2+1}),
\end{equation}  
where each $\mathcal M_i$ has ``curvature degree'' $i$, i.e., 
\begin{equation}
\label{expinter}
\mathcal M_i(t)=\int_0^t\int_0^{\tau_i}\cdots\int_0^{\tau_2}DR^\dagger(\widetilde x_{\tau_i})\cdots DR^\dagger(\widetilde x_{\tau_1})\,d\tau_1\cdots d\tau_i,
\end{equation}
where $0\leq \tau_{1}\leq \tau_2\leq\cdots\leq \tau_i\leq t$.
On the other hand, if $t$ is small we have $V_t=e^{Dv_t}$ for some ${\mathfrak {so}}_n$-valued process $v_t$. From Proposition \ref{pinnedest} we thus have 
\[
V_t=\sum_{j=0}^{n/2}\frac{1}{j!}Dv_t^j+r(t), 
\]
with $\mathbb E_{t;x,x}|r(t)|=O(t^{n/2+1})$.
From this and (\ref{iterint}) we obtain
\begin{equation}
\label{expprod}M_tV_t=\sum_{i,j\leq n/2}\frac{(-1)^i}{j!2^i}\mathcal M_i(t)Dv_t^j+s(t), 
\end{equation}
with $\mathbb E_{t;x,x}|s(t)|=O(t^{n/2+1})$.
Now, the standard Berezin-Patodi cancellation \cite[Corollary 7.3.3]{Hs2} applies and we get
\[
{\rm Str}\, \mathcal M_i(t)Dv_t^j=0, \quad 2i+j<n. 
\]
Since again by Proposition \ref{pinnedest} we know that, as $t\to 0$, 
\[
\mathbb E_{t;x,x}|\mathcal M_i(t)Dv_t^j|=O(t^{i+j}), 
\]
the only contributions to the asymptotic limit of the left-hand side of (\ref{asymstrint}) coming from applying $\mathbb E_{t;x,x}\,{\rm Str}$ to (\ref{expprod}) occur when $2i+j\geq n$ and $i+j\leq n/2$, that is, $j=0$ and $i=n/2$. By (\ref{expinter}) this gives the first term in the right-hand side of (\ref{asymstrint}). 

We now turn to the boundary contribution in (\ref{asymstrint}). By (\ref{eqmf1}) this time we should iterate
\[
M_{\epsilon,t}=I-\frac{1}{2}\int_0^t M_{\epsilon,\tau} DR^\dagger(\widetilde x_\tau)\,d\tau-\int_0^tM_{\epsilon,\tau}DA_\epsilon^\dagger(\widetilde x_\tau)\,d\lambda_\tau,  
\]
along Brownian loops pinned at some $x_0\in Z$.
We obtain
\begin{equation}\label{expboundcont}
M_{\epsilon,t}=\sum_{|I|\leq n/2-1/2}\frac{(-1)^{k+l}}{2^k}\mathcal N_{\epsilon,I}(t)+\phi_\epsilon(t),
\end{equation}  
where $\mathcal N_{\epsilon,I}(t)$ has ``curvature bi-degree'' $I=(k,l)$ and $|I|=k+l/2$. This means that   
\begin{eqnarray}\label{bidegree}
\mathcal N_{\epsilon,I}(t) & = & \sum\int_0^t\int_0^{\tau_{k+l}}\cdots\int_0^{\tau_2} 
DR^\dagger(\widetilde x_{\tau_{k+l}})\cdots DR^\dagger(\widetilde x_{\tau_{1+l}})DA_\epsilon^\dagger(\widetilde x_{\tau_{l}})\cdots 
DA_\epsilon^\dagger(\widetilde x_{\tau_{1}})
\times\nonumber\\
& & \quad \quad \quad\times\,
d\lambda_{\tau_{1}}\cdots d\lambda_{\tau_{l}} d\tau_{1+l} \cdots 
d\tau_{k+l},
\end{eqnarray}
where 
$0\leq \tau_1\leq\cdots\leq\tau_{l}\leq\tau_{1+l}\leq\cdots\leq\tau_{k+l}\leq t$
and 
the sum extends over expressions obtained from the displayed term by rearrangement of the  factors in its integrand. 
Moreover, the remainder $\phi_\epsilon$ collects similar terms satisfying $|I|\geq n/2$, with an $M_{\epsilon,t}$ inserted in each term with exactly $n$ curvature factors.

The localization around $Z$ mentioned above is once again a consequence of  Proposition \ref{notfeel1}, namely,
given $\rho>0$ small, up to a $\mathbb P_{t;x,x}$-negligible set the Brownian loop pinned at $x\in Z$ remains in the geodesic collar neighborhood
$
B_{\rho}(Z)
$ 
of radius $\rho$ around $Z$ as $t\to 0$. Notice that $B_\rho(Z)$ is foliated by parallel hypersurfaces (the level sets of the distance function to $Z$).
Thus, we may assume that $\rho\to 0$, so    
it suffices to evaluate (\ref{expboundcont}) on  forms of the type $V_t\omega$, where $\omega$ is absolute along the leaves of this  foliation.  
By (\ref{redAform}) this eliminates the dependence of (\ref{expboundcont}) on $\epsilon$ so in the limit $\epsilon\to 0$ we obtain    
\[
M_t=\sum_{|I|\leq n/2-1/2}\frac{(-1)^{k+l}}{2^k}\mathcal N_{I}(t)+\phi(t), 
\]
where the rightand side is obtained from (\ref{expboundcont}) after replacing $A_\epsilon^\dagger$ by $A^\dagger$.

We now recall that 
\[
\mathbb E_x\lambda_t=\int_0^t\int_{Z}K_0(s;x,y)\,dZ_y\,ds= O(t^{1/2}),
\]
so that $\mathbb E_{t;x,x}|\lambda_t|=O(t^{1/2})$ as well, which gives  $\mathbb E_{t;x,x} |\phi(t)|=O(t^{n/2})$.
It follows that 
\begin{equation}\label{restmat}
M_t V_t=\sum_{|I|,j\leq n/2-1/2}\frac{(-1)^{k+l}}{j!2^k}\mathcal N_{I}(t)Dv_t^j+\xi(t),
\end{equation}
where  
$\mathbb E_{t;x,x}|\xi(t)|=O(t^{n/2})$. Moreover, since the endomorphisms only act on absolute (and hence tangential) forms,   
the Berezin-Patodi cancellation here asserts that
\begin{equation}\label{berpat}
{\rm Str}\,\mathcal N_{I}(t)Dv_t^j=0,\quad 2|I|+j<n-1. 
\end{equation}

On the average, the remainder $\xi(t)$ in (\ref{restmat}) has exactly the same behavior as the Neumann heat kernel $K_0$ when $t\to 0$, thus yielding no asymptotic cancellation. 
We observe however that in the localization around $Z$ mentioned above there holds  $dX|_{B_\rho(Z)}=\zeta\wedge dZ$, so that for 
$\zeta(t)=\zeta(w_{t;x,x})$,
$E_{t;x,x}|\zeta(t)|$ measures the average normal displacement of the Brownian loop with respect to the boundary. 
In fact, it follows readily from (\ref{distest}) that  $\mathbb E_{t;x,x}|\zeta(t)|=O(t^{1/2})$. Thus, 
\begin{equation}\label{exprod2}
M_t V_tdX=\left(\sum_{|I|,j\leq n/2-1/2}\frac{(-1)^{k+l}}{j!2^k}\mathcal N_{I}(t)Dv_t^j\right)\zeta(t)\wedge dZ+\xi(t)\zeta(t)\wedge dZ,
\end{equation}
where $\mathbb E_{t;x,x}|\xi(t)\zeta(t)|=O(t^{n/2+1/2})$.
Combining this with (\ref{berpat}) and the fact that, again by Proposition \ref{pinnedest}, 
\[
\mathbb E_{t;x,x}|\mathcal N_I(t)Dv_t^j\zeta(t)|=O(t^{|I|+ j+1/2}),
\]
we conclude  that the only contributions to the asymptotic limit of the left-hand side of (\ref{asymstrint})
coming from applying $\mathbb E_{t;x,x}\,{\rm Str}$ to (\ref{exprod2}) occur when $2|I|+j\geq n-1$ and $|I|+j+1/2\leq n/2$, that is,  $j=0$ and $|I|=(n-1)/2$. By (\ref{redAform}) we may replace $A^\dagger_\epsilon$ by $A^\dagger$ in (\ref{bidegree}) so as to obtain the second term in the right-hand side of (\ref{asymstrint}) as $t\to 0$. This completes the proof of Theorem  \ref{asymstr}. 

An immediate consequence of Theorem \ref{asymstr}, as applied  to (\ref{witten}), is the following foundational result first proved by Allendoerfer and Weil \cite{AW}.

\begin{theorem}
	\label{allenweil} Let $(X,g)$ be a compact Riemannian manifold of even dimension $n\geq 2$ with boundary $Z$. Then 
	\[
	\chi(X)=b_n\int_X{\rm Str}\,DR^{n/2}\,dX +\sum_{2k+l=n-1}b_{n,k,l}\int_{Z}\,{\rm Str}\, DR^kDA^l
	dZ.
	\] 
\end{theorem}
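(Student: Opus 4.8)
The plan is to let $t\to 0$ in the path integral representation (\ref{witten}). Since the left-hand side $\chi(X)$ is independent of $t$, it suffices to show that the integral on the right converges as $t\to 0$ to the asserted expression; by Theorem \ref{asymstr} the integrand $(2\pi t)^{-n/2}\mathbb E_{t;x,x}{\rm Str}\,(M_tV_t)\,dX$ already has the correct pointwise behaviour, so the whole matter reduces to interchanging the limit with the integral over $X$ and collecting separately the bulk and boundary contributions.

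First I would dispose of the interior. Fix a small $\rho>0$ and restrict the integral to $X\backslash B_\rho(Z)$. By Proposition \ref{notfeel1} (the principle of not feeling the boundary) the Brownian loop pinned at such an $x$ avoids $Z$ with overwhelming probability, so the boundary local time term in (\ref{eqmf1}) is invisible and Hsu's closed-case analysis applies verbatim: the integrand converges to $b_n{\rm Str}\,DR^{n/2}(x)$ and, by the estimates used in the proof of Theorem \ref{asymstr}, is bounded uniformly in $t$ by an integrable function. Dominated convergence then gives $\int_{X\backslash B_\rho(Z)}\to\int_{X\backslash B_\rho(Z)}b_n{\rm Str}\,DR^{n/2}\,dX$, and letting $\rho\to 0$ recovers the full bulk term.

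It remains to analyze the collar contribution over $B_\rho(Z)$. In Fermi coordinates $(r,z)$, with $r={\rm dist}(\cdot,Z)$, one has $dX|_{B_\rho(Z)}=\zeta\wedge dZ$ as in the proof of Theorem \ref{asymstr}, and the estimate $\mathbb E_{t;x,x}|\zeta(t)|=O(t^{1/2})$ shows that the relevant mass concentrates in the shell $r=O(t^{1/2})$. Substituting $r=t^{1/2}u$, and using the Berezin--Patodi cancellations (\ref{berpat}) together with the orders of magnitude $\mathbb E_{t;x,x}|\mathcal N_I(t)Dv_t^j\zeta(t)|=O(t^{|I|+j+1/2})$ established there, one sees that only the term $j=0$, $|I|=(n-1)/2$ survives after multiplication by $K_0(t;x,x)\sim(2\pi t)^{-n/2}$, and its coefficient is exactly the $dZ$-integrand in (\ref{asymstrint}). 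Hence $\int_{B_\rho(Z)}K_0(t;x,x)\,\mathbb E_{t;x,x}{\rm Str}\,(M_tV_t)\,dX\to\sum_{2k+l=n-1}b_{n,k,l}\int_Z{\rm Str}\,DR^kDA^l\,dZ$, the transverse $u$-integral having been absorbed into the universal constants $b_{n,k,l}$ of Theorem \ref{asymstr}. Adding this to the bulk term gives the formula.

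The main obstacle is precisely this passage to the limit under the integral near $Z$: there $K_0(t;x,x)$ blows up like $t^{-n/2}$ while the elementary bound $|{\rm Str}\,(M_tV_t)|\le C$ is far too weak to dominate the integral uniformly in $t$, so a naive application of dominated convergence fails. One must instead exploit the finer, supertrace-assisted structure from the proof of Theorem \ref{asymstr} — namely that the genuinely contributing part of $M_tV_t$ carries an extra transverse factor of size $O(t^{1/2})$ coming from $\zeta(t)$, while all lower-order terms are killed by (\ref{berpat}) — together with uniform control of the Neumann heat kernel and of the reflected Brownian holonomy $V_t$ on the collar, in order to make the rescaling $r=t^{1/2}u$ rigorous. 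Once this uniformity is secured the theorem follows at once by letting $t\to 0$ in (\ref{witten}).
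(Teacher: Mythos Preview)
Your proposal is correct and follows the same route as the paper: combine the path integral representation (\ref{witten}) with the local formula Theorem \ref{asymstr} and let $t\to 0$. The paper states Theorem \ref{allenweil} as an \emph{immediate} consequence of Theorem \ref{asymstr} applied to (\ref{witten}); the reason it can do so is that (\ref{asymstrint}) is already a statement about convergence of \emph{measures} on $X$ (note the $dX$ and $dZ$ on the right), and the collar analysis, the Berezin--Patodi cancellations (\ref{berpat}), and the transverse factor $\zeta(t)$ that you invoke are exactly the ingredients used in the proof of Theorem \ref{asymstr} itself to obtain that convergence. In other words, the ``main obstacle'' you identify --- the interchange of limit and integral near $Z$ --- is not left over for the proof of Theorem \ref{allenweil} but has already been absorbed into Theorem \ref{asymstr}; your argument simply unpacks this once more rather than adding anything new.
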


One easily computes that
\[
{\rm Str}\,DR^{n/2}=c_n\sum\delta^{i_1i_2\cdots i_{n-1}i_n}_{j_1j_2\cdots j_{n-1}j_n}R_{i_1i_2}^{j_1j_2}\cdots R_{i_{n-1}i_n}^{j_{n-1}j_n},
\]
a universal multiple of the Pfaffian, with similar expansions holding
for the boundary integrands. The universal constants in the theorem can be explicitly computed either by carefully tracking the coefficients appearing in the various expansions along the proof or by simply  evaluating the formula on the manifolds $\mathbb S^l\times\mathbb D^{n-l}$, $l=0,1,\cdots,n$. In either way  we  recover the standard statement of the theorem \cite{AW, Ch2, G2, SUW}.

\begin{corollary}
	\label{coro} If $Z\subset X$ is totally geodesic then 
	\[
	\chi(X)=b_n\int_M{\rm Str}\,DR^{n/2}\,dX. 
	\] 
\end{corollary}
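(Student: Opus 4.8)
The plan is to obtain Corollary \ref{coro} as an immediate specialization of Theorem \ref{allenweil}, by checking that under the hypothesis the whole boundary integral drops out term by term. First I would recall that \emph{totally geodesic} means precisely that the second fundamental form of $Z$ vanishes identically; equivalently, with the conventions of Section \ref{feyn-kac}, the shape operator $A=-\nabla\nu$ (extended to $TX|_Z$ by $A\nu=0$) is the zero endomorphism at every point of $Z$. Since $DA\in{\rm End}(\wedge TX|_Z)$ is, by construction, the derivation extension of $A$ and hence linear in $A$, it follows that $DA\equiv 0$ along $Z$, and therefore $DA^l=0$ for every $l\geq 1$.

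The only point requiring a moment's thought is a parity count. In the boundary sum $\sum_{2k+l=n-1}b_{n,k,l}\int_Z{\rm Str}\,DR^kDA^l\,dZ$ appearing in Theorem \ref{allenweil}, the exponents satisfy $2k+l=n-1$. As $n$ is even, $n-1$ is odd, while $2k$ is even, so $l$ is forced to be odd; in particular $l\geq 1$ in every term. Each boundary integrand therefore carries a factor $DA^l$ with $l\geq 1$, which vanishes by the previous paragraph. Hence the boundary contribution in Theorem \ref{allenweil} is identically zero, and the formula collapses to $\chi(X)=b_n\int_X{\rm Str}\,DR^{n/2}\,dX$, which is the assertion (with $M=X$).

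I expect no genuine obstacle here: the statement is a direct consequence of Theorem \ref{allenweil} together with the definition of $DA$ and the parity observation; had $n$ been odd there would be a surviving term with $l=0$ and $k=(n-1)/2$, but that case lies outside the scope of Theorem \ref{allenweil}. One may note, in passing, that the same reasoning applies already at the level of the local formula of Theorem \ref{asymstr}: when $A\equiv 0$ the boundary density $\sum_{2k+l=n-1}b_{n,k,l}\,{\rm Str}\,DR^kDA^l$ vanishes pointwise along $Z$, so that the pinned-loop integrand in (\ref{witten}) converges as $t\to 0$ to the bulk Pfaffian density with no surviving collar term.
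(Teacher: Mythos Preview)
Your argument is correct and matches the paper's own proof: one uses the parity constraint $l=n-1-2k$ with $n$ even to force $l\geq 1$, so every boundary integrand carries a positive power of $DA$, which vanishes when $Z$ is totally geodesic. The paper records this in a single sentence, but the content is identical.
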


\begin{proof}
	Just observe that $l=n-1-2k$ is odd, so the boundary integrands always display a power of $DA$. 
\end{proof}

We now prove (\ref{gb-impar}). We first consider the local Gauss-Bonnet formula in this case.
\begin{theorem}
	\label{asymstr2}(Local Gauss-Bonnet with boundary in odd dimension) If $X$ has odd dimension $n\geq 3$ then  
	\begin{equation}\label{asymstrintodd}
	\lim_{t\to 0}\,(2\pi t)^{-n/2}\mathbb E_{t;x,x}{\rm Str}\,M_tV_tdX = d_n{\rm Str}\,DR_{Z}^{\frac{n-1}{2}}dZ,
	\end{equation} 
	for some universal constant $d_n$. 
	Here, $R_Z$ is the curvature tensor of the induced metric on $Z$. 
\end{theorem}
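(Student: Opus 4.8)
The plan is to run the path-integral argument of Theorem~\ref{asymstr} essentially verbatim, tracking only the change of parity, and then to carry out one extra algebraic step specific to the odd-dimensional case. Starting from the representation (\ref{witten}) and splitting the integrand with the help of the principle of not feeling the boundary (Proposition~\ref{notfeel1}), we treat separately the interior and a thin collar $B_\rho(Z)$. For $x\in X\setminus Z$ the reasoning of \cite{Hs2} applies with $Z$ absent, giving an expansion $M_tV_t=\sum_{i,j\leq (n-1)/2}\frac{(-1)^i}{j!\,2^i}\mathcal M_i(t)Dv_t^j+s(t)$ with $\mathbb E_{t;x,x}|s(t)|=O(t^{(n+1)/2})$ and $\mathbb E_{t;x,x}|\mathcal M_i(t)Dv_t^j|=O(t^{i+j})$, together with the Berezin--Patodi identity ${\rm Str}\,\mathcal M_i(t)Dv_t^j=0$ for $2i+j<n$. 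Since $n$ is odd, the surviving condition $2i+j\geq n$ with $i,j\geq 0$ forces $i+j\geq (n+1)/2$, so $(2\pi t)^{-n/2}\mathbb E_{t;x,x}|\mathcal M_i(t)Dv_t^j|=O(t^{1/2})\to 0$; hence there is no interior contribution, in accordance with the absence of a $dX$ term in (\ref{asymstrintodd}).

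The collar term is handled exactly as in the proof of Theorem~\ref{asymstr}: iterating (\ref{eqmf1}) along loops pinned at $x\in Z$ as in (\ref{expboundcont}), passing to the limit $\epsilon\to 0$ through (\ref{redAform}), writing $dX|_{B_\rho(Z)}=\zeta\wedge dZ$, and using $\mathbb E_{t;x,x}|\mathcal N_I(t)Dv_t^j\zeta(t)|=O(t^{|I|+j+1/2})$ together with the Berezin--Patodi cancellation (\ref{berpat}), one finds that the only terms surviving $(2\pi t)^{-n/2}\mathbb E_{t;x,x}{\rm Str}$ are those with $j=0$ and $2k+l=2|I|=n-1$. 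This yields
\[
\lim_{t\to 0}(2\pi t)^{-n/2}\,\mathbb E_{t;x,x}{\rm Str}\,M_tV_t\,dX=\Big(\sum_{2k+l=n-1}b_{n,k,l}\,{\rm Str}\,DR^kDA^l\Big)dZ ,
\]
where, as in Theorem~\ref{allenweil}, the supertrace is over the tangential exterior algebra $\wedge^\bullet T_xZ$, $A$ is the shape operator, and $R$ denotes the ambient curvature compressed to $T_xZ$. Note that, $n-1$ being even, $l$ ranges over even integers and $k$ over $0,1,\cdots,(n-1)/2$.

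It remains to identify this universal boundary integrand with the intrinsic one. For this one invokes the Gauss equation, which expresses the ambient curvature restricted to $T_xZ$ as the intrinsic curvature $R_Z$ modified by a term quadratic in $A$. Substituting and expanding, $\sum_{2k+l=n-1}b_{n,k,l}{\rm Str}\,DR^kDA^l$ becomes a universal polynomial in $R_Z$ and $A$; the assertion is that all monomials genuinely involving $A$ cancel, leaving a universal multiple $d_n\,{\rm Str}\,DR_Z^{(n-1)/2}$, which --- as in the closed even-dimensional situation --- is a multiple of the Pfaffian of $R_Z$. This cancellation is a purely algebraic identity among the constants $b_{n,k,l}$; it can be established directly from the Berezin--Patodi/Mathai--Quillen formalism, or checked by evaluating the universal combination on the test manifolds $\mathbb S^l\times\mathbb D^{n-l}$, for which the intrinsic and extrinsic curvatures of the boundary can be prescribed independently. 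It is also forced on abstract grounds: integrating over $Z$ via (\ref{witten}) gives $\chi(X)$ as $d_n\int_Z{\rm Str}\,DR_Z^{(n-1)/2}\,dZ$ plus the integrals of the putative $A$-terms, and since $\chi(X)$ is a topological invariant while those integrals can be varied by deformations of the metric near $Z$, they must vanish; the remaining term is then a fixed multiple of $\chi(Z)$ by the Gauss--Bonnet formula (\ref{gbclosed}) for the closed even-dimensional manifold $Z$, and a single example normalizes $d_n$ so that $\chi(X)=\frac{1}{2}\chi(Z)$, recovering (\ref{gb-impar}).

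The main obstacle is precisely this last identification. The stochastic machinery produces the boundary integrand only in the a priori extrinsic form $\sum_{2k+l=n-1}b_{n,k,l}{\rm Str}\,DR^kDA^l\,dZ$, and nothing in the short-time asymptotics makes it transparent that the shape-operator contributions must disappear; that they do is the ``fantastic cancellation'' in its odd-dimensional incarnation, where the interplay of the Gauss equation with the particular universal constants supplied by the expansion wipes out the extrinsic geometry of $Z$ altogether and reduces (\ref{gbvoundary}) to the topological identity (\ref{gb-impar}).
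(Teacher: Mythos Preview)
Your stochastic analysis is essentially the same as the paper's: the vanishing of the bulk term for odd $n$, the collar iteration, the use of $\mathbb E_{t;x,x}|\zeta(t)|=O(t^{1/2})$, and the Berezin--Patodi count leading to $\sum_{2k+l=n-1}b_{n,k,l}\,{\rm Str}\,DR^kDA^l\,dZ$ all match. The divergence is in the last step, where you must identify this extrinsic expression with $d_n\,{\rm Str}\,DR_Z^{(n-1)/2}$.

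The paper does this directly and algebraically. Writing $l=2m$, it observes that $(DA)^2=D\mathcal A$, where $\mathcal A(z_1,z_2,w_1,w_2)=\det(\langle Az_i,w_j\rangle)$ is exactly the Gauss-equation correction, and then checks that the constants produced by the iterated time/local-time integrals satisfy $b_{n,k,m}=d_n\binom{(n-1)/2}{k}$. The binomial theorem then gives $d_n\,{\rm Str}\,D(R+\mathcal A)^{(n-1)/2}$, and Gauss' equation $R+\mathcal A=R_Z$ finishes the proof in one line. This is the ``fantastic cancellation'' made completely explicit.

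Your route is indirect, and as written it has a genuine gap. The ``abstract grounds'' argument --- $\chi(X)$ is topological while the $A$-integrals vary under metric deformation --- shows at most that the \emph{integrals} of the $A$-dependent monomials vanish; the theorem, however, is a \emph{pointwise} limit. To upgrade, you would still need to argue that a universal polynomial in $(R_Z,A)$ whose integral vanishes for every choice of metric is identically zero, which requires showing that $A$ can be localized and varied independently of the induced metric (e.g.\ via a collar $Z\times[0,\epsilon)$). You gesture at this but do not carry it out. The test-manifold suggestion is also off: on $\mathbb S^l\times\mathbb D^{n-l}$ with the standard metric the boundary data are fixed, not independently prescribable, so that family alone does not separate the monomials. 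The cleanest fix is simply to adopt the paper's two observations --- $(DA)^2=D\mathcal A$ and the binomial values of $b_{n,k,m}$ --- which turn your asserted cancellation into a two-line computation.
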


 	Since $n$ is odd, the argument leading to (\ref{asymstrint}) already shows that no bulk term appears in (\ref{asymstrintodd}). As for the boundary term, if we repeat the argument by taking into account that now $l=2m$ is even, we get
 \[
 \lim_{t\to 0}\,(2\pi t)^{-n/2}\mathbb E_{t;x,x}{\rm Str}\,M_tV_tdX  =  \sum_{k+m=\frac{n-1}{2}}b_{n,k,m}{\rm Str}\,DR^{k}\left(DA^{2}\right)^mdZ,
 \]
 with $DA^2=D\mathcal A$,
 where  
 \[
 \mathcal A(z_1,z_2,w_1,w_2)=\det(\langle Az_i,w_j\rangle).
 \]
 It is not hard to check that 
 \[
 b_{n,k,m}=d_n
 \left(
 \begin{array}{c}
 \frac{n-1}{2}\\
 k
 \end{array}
 \right),
 \]
 so we end up with 
 \[
 \lim_{t\to 0}\,(2\pi t)^{-n/2}\mathbb E_{t;x,x}{\rm Str}\,M_tV_tdX  =   
 d_{n}\,{\rm Str}\, D\left(R+\mathcal A\right)^{\frac{n-1}{2}}dZ.
 \]
 Now, 
 $R+\mathcal A=R_{Z}$ by Gauss equation and the proof of Theorem \ref{asymstr2} is completed.

Leading this to (\ref{witten}) and applying Theorem \ref{allenweil} to $Z$, which is a  closed even dimensional manifold,
yields $\chi(X)=e_n\chi(Z)$, for some universal constant $e_n$, and  evaluating this on a ball we conclude that $e_n=1/2$. This completes the proof of (\ref{gb-impar}) .

\appendix\section{The reflected Brownian holonomy}\label{appbridge}

In this appendix we collect the technical facts regarding the reflected Brownian holonomy  needed in previous sections. This is a direct refinement of the theory presented in \cite[Chapter 7]{Hs2}, which deals with the closed case, so we omit some details. 

As always, we consider a compact Riemannian manifold $(X,g)$ of dimension $n\geq 2$ and with smooth boundary $Z$. We denote by $x_s$, $s\geq 0$, the associated reflected Brownian motion starting at $x_0=x$, whose law is $\mathbb P_x$. Given $y\in X$ and $t>0$, the reflected Brownian bridge $w_{s;x,y}$, $0\leq s\leq t$, is the process  defined by following  $x_s$ and conditioning it to hit $y$ at time $t$.

We need to figure out the stochastic differential equation satisfied by ${w}_{s;x,y}$. 
We first observe that the law $\mathbb P_{t;x,y}$ of $w_{s;x,y}$ should be  determined, via Kolmogorov's extension theorem,  by the finite dimensional marginal densities
\[
K_0(t;x,y)^{-1}\Pi_{i=1}^r K_0(s_{i+1}-s_i;x_i,x_{i+1}), 
\] 
where $0=s_0<s_1<\cdots<s_r<s_{r+1}=t$, $x_0=x$ and $x_{r+1}=y$ and $K_0$ is the Neumann heat kernel (the transition probability density of $x_s$). 
It follows that 
\[
E_s:=\frac{K_0(t-s;{x}_{s},y)}{K_0(t;x,y)}, \quad s<t,
\]
satisfies 
\[
\frac{d\mathbb P_{t;x,y}}{d\mathbb P_x}|_{\mathcal F_s}=E_s,
\]
where $\mathcal F_*$ is the standard filtration associated to $x_s$. 

Now set 
\[
E_s^\dagger=\frac{K_0^\dagger(t-s;{\widetilde x}_{s},y)}{K_0(t;x,y)}, 
\]
where $K^\dagger_0(s;u,y)=K_0(s;\pi u,y)$, $u\in P_{\rm SO}(X)$.
Since 
$
{\partial\log E^\dagger_s}/{\partial \nu^\dagger}=0$, 
It\^o's formula gives
\[
d\log E^\dagger_s=\langle G^\dagger_s,db_s\rangle-\frac{1}{2}|G^\dagger_s|^2ds,
\] 
where $G_s=\nabla\log K_0(t-s;{x}_{s},y)$. Thus, by Girsanov's theorem, under $\mathbb P_{t;x,y}$
the process
$
b^s-\int_0^sG_\tau\,d\tau
$
is a reflected Brownian motion. Comparing with (\ref{stocut}) we conclude that ${\widetilde w}_{s;x,y}$ satisfies the stochastic differential equation
\begin{eqnarray}\label{bridgesde}
d{\widetilde w}_{s;x,y} & = & \sum_{i=1}^n H_i({\widetilde w}_{s;x,y})\circ\left(
db^i_s+H_i\log K_0^\dagger(t-s;{\widetilde w}_{s;x,y},y)ds 
\right)+\nonumber\\
& & \quad +\nu^\dagger({\widetilde w}_{s;x,y})d\lambda_s.
\end{eqnarray}
Thus, reflected Brownian bridge is just reflected Brownian motion with an added drift involving the logarithmic derivative of $K_0$.
We note however that the drift 
is singular at $s=t$. Fortunately, careful first order estimates of $K_0$ allow us to bypass this difficulty and restore the semimartingale character of ${\widetilde w}_{s;x,y}$ at $s=t$. 

\begin{proposition}
	\label{restore}
	The reflected Brownian bridge ${w}_{s;x,y}$ is a $\mathbb P_{t;x,y}$-semimartingale on the whole interval $[0,t]$. 
\end{proposition}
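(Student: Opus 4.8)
The plan is to show that the only obstruction to $w_{s;x,y}$ being a semimartingale on all of $[0,t]$ is the singular drift term in \eqref{bridgesde}, namely the term involving $H_i\log K_0^\dagger(t-s;\widetilde w_{s;x,y},y)$ as $s\uparrow t$, and that this term is in fact integrable up to $s=t$. The bulk martingale part $\int_0^s H_i(\widetilde w)\circ db^i$ and the boundary local-time term $\int_0^s\nu^\dagger(\widetilde w)\,d\lambda$ are manifestly well-behaved on the closed interval (the local time $\lambda_s$ is nondecreasing with $\mathbb E\lambda_t<\infty$, as recorded in Section \ref{proof-gb}), so everything reduces to a uniform estimate on the logarithmic gradient of the Neumann heat kernel near the diagonal.

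First I would invoke the standard Gaussian-type bounds for the Neumann heat kernel $K_0$ and, crucially, its first spatial derivative: on a compact manifold with boundary there is a constant $C$ such that
\begin{equation}\label{gradest}
|\nabla_x\log K_0(\sigma;x,y)|\leq C\left(\sigma^{-1/2}+\frac{d(x,y)}{\sigma}\right),
\end{equation}
uniformly for $\sigma\in(0,t]$ and $x,y\in X$. This is where the reflecting (Neumann) boundary condition matters: one needs the gradient estimate to hold up to the boundary, which follows from the parametrix construction for $K_0$ together with the method of images near $Z$, or alternatively from Li--Yau / Cheng--Li--Yau type estimates adapted to the Neumann setting. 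I would cite this rather than prove it, as it is classical.

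Next, setting $\sigma=t-s$ and $x=w_{s;x,y}$, I would control the running integral $\int_0^t|G_s|\,ds$ where $G_s=\nabla\log K_0(t-s;w_{s;x,y},y)$. Near $s=t$ the bridge is pinned at $y$, so $d(w_{s;x,y},y)\to 0$; more quantitatively, standard bridge estimates give $\mathbb E_{t;x,y}\,d(w_{s;x,y},y)^2=O(t-s)$ as $s\uparrow t$, whence by Cauchy--Schwarz the contribution of the second term in \eqref{gradest} is $O((t-s)^{-1/2})$ in $L^1$, and that of the first term is $O((t-s)^{-1/2})$ as well. Since $\int_0^t(t-s)^{-1/2}\,ds<\infty$, the drift has absolutely integrable paths (after taking expectations, hence almost surely along a suitable argument) on $[0,t]$, so it defines a finite-variation process on the closed interval. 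Combined with the martingale and local-time parts, this exhibits $w_{s;x,y}$ as a semimartingale on all of $[0,t]$.

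The main obstacle I anticipate is making the near-$s=t$ estimate rigorous \emph{under the bridge measure} $\mathbb P_{t;x,y}$ rather than under $\mathbb P_x$: the Girsanov change of measure used to derive \eqref{bridgesde} itself degenerates as $s\to t$, so one cannot simply transfer moment bounds. The clean way around this is to work on $[0,t-\delta]$ for $\delta>0$ — where everything is a bona fide semimartingale — derive the estimate $\int_0^{t-\delta}|G_s|\,ds$ uniformly in $\delta$ using \eqref{gradest} and the bridge's own heat-kernel marginals (which are explicit and do not require Girsanov), and then pass to the limit $\delta\to 0$ using monotone convergence plus the integrability of $(t-s)^{-1/2}$. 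One must also check that the quadratic variation of the martingale part stays bounded, which is immediate since the $H_i$ are smooth bounded vector fields on the compact $P_{\rm SO}(X)$. With these pieces in place the proposition follows.
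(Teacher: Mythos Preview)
Your proposal is correct and follows essentially the same route as the paper: both reduce the question to showing $\mathbb E_{t;x,y}\int_0^t |\nabla\log K_0(t-s;x_s,y)|\,ds<\infty$ via the gradient bound $|\nabla\log K_0(s;\cdot,y)|\leq C(d_X(\cdot,y)/s+s^{-1/2})$ combined with bridge moment estimates of the form $\mathbb E_{t;x,y}\,d_X(w_s,y)^N=O((t-s)^{N/2})$. The only minor difference is that the paper obtains the gradient estimate by applying It\^o's formula to $\log K_0^\dagger$ and following the procedure in \cite[Section 5.5]{Hs2}---noting that the Neumann condition $\partial_{\nu^\dagger}\log K_0^\dagger=0$ annihilates the extra boundary term coming from (\ref{bridgesde})---rather than citing it as a PDE result; this keeps the argument self-contained within the stochastic framework but is otherwise equivalent to what you outline.
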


\begin{proof}
	We already know that $\widetilde w_{s;x,y}$ is a horizontal $\mathbb P_{t;x,y}$-semimartingale for $s<t$. This allows us to explore (\ref{bridgesde}) by applying It\^o's formula   to the function 
	\[
	J(s;u):=\log K_0^\dagger(s;u,y).
	\]
	Since $\partial  J(s;u)/\partial \nu^\dagger=0$, the extra boundary term in (\ref{bridgesde}) plays no role in the final result. Thus, the procedure in \cite[Section 5.5]{Hs2} applies with no substantial modification and we end up with the estimate
	\begin{equation}
	\label{estheatder}
	|\nabla\log K_0(s;x,y)|\leq C\left({d_X(x,y)}{s}^{-1}+s^{-1/2}\right),
	\end{equation}   
	for $(s;x,y)\in (0,1)\times X\times X$, where $d_X$ is the intrinsic distance on $X$.
	Since 
	\begin{equation}\label{distest}
	\mathbb E_{t;x,y}d_X(x, x_s)^N\leq C{s}^{N/2},
	\end{equation}
	for any integer $N>0$, 
	we  get
	\[
	\mathbb E_{t;x,y}\int_0^t|\nabla\log K_0(t-s;x_s,y)|ds<+\infty,
	\]
	which suffices to finish the proof.
\end{proof}

This result allows us to estimate the stochastic parallel transport  $U_t$ acting on differential forms along loops pinned at $x$ (Brownian holonomy). Note that if $t$ is small we have $U_t=e^{Du_t}$ for some ${\mathfrak {so}}_n$-valued process $u_t$. 

\begin{proposition}
	\label{pinnedest} For any integer $N>0$ we have 
	\[
	\mathbb E_{t;x,x}|U_t-I|^N\leq Ct^N.
	\]
	Equivalently, 
	\[
	\mathbb E_{t;x,x}|u_t|^N\leq Ct^N.
	\]
\end{proposition}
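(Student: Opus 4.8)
The plan is to estimate the Brownian holonomy $U_t$ along loops pinned at $x$ by controlling, in $L^N$, the $\mathfrak{so}_n$-valued process $u_t$ that solves the linear ODE governing parallel transport along $\widetilde w_{s;x,x}$. Recall that in a fixed frame $U_t$ satisfies a Stratonovich equation of the form $dU_s = U_s\,\Omega(\widetilde w_{s;x,x})\circ d\widetilde w_{s;x,x}$, where $\Omega$ is the (bounded, smooth) connection form, and $U_0 = I$. First I would invoke Proposition \ref{restore}: since $w_{s;x,x}$ is a genuine $\mathbb P_{t;x,x}$-semimartingale on the whole of $[0,t]$, the Stratonovich integral makes sense up to the terminal time and can be converted to an It\^o integral at the cost of a bounded drift correction. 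Writing $U_t - I = \int_0^t U_s\,\alpha(\widetilde w_{s;x,x})\,ds + \int_0^t U_s\,\beta(\widetilde w_{s;x,x})\,d\widetilde w_{s;x,x}^{\mathrm{mart}}$ with $\alpha,\beta$ smooth and bounded on the compact frame bundle, the martingale part decomposes, via (\ref{bridgesde}), into a genuine $d b_s$-martingale, a drift term involving $\nabla\log K_0(t-s;\cdot,x)$, and a boundary-local-time term $\nu^\dagger d\lambda_s$.

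The key steps, in order, are: (i) bound the Brownian-local-time contribution, using $\mathbb E_{t;x,x}|\lambda_t|^N = O(t^{N/2})$ — more precisely the stronger bound $\mathbb E_{t;x,x}\lambda_t^N = O(t^{N/2})$ that follows from $\mathbb E_x\lambda_t = O(t^{1/2})$ together with the Radon–Nikodym bound for $E_s$ near $s=t$; since $A\nu = 0$ by our extension convention, the holonomy form actually annihilates the $\nu^\dagger$-direction, so this term contributes nothing, but even a crude bound gives $O(t^{N/2})$, which is dominated by $O(t^N)$ only after one notices the $d\lambda$ term is absent and so no half-power appears; (ii) bound the singular drift term $\int_0^t U_s\,\beta\,\nabla\log K_0(t-s;\widetilde w_s,x)\,ds$ in $L^N$ using the gradient estimate (\ref{estheatder}) and the bridge moment bound (\ref{distest}): by H\"older, $\mathbb E_{t;x,x}\big(\int_0^t |\nabla\log K_0(t-s;x_s,x)|\,ds\big)^N \le C\,t^{N/2}\cdot t^{N/2}$-type computations, the point being $\int_0^t \big(d_X(x,x_s)(t-s)^{-1} + (t-s)^{-1/2}\big)\,ds$ has the right power of $t$ after taking expectations; (iii) bound the genuine It\^o-martingale part $\int_0^t U_s\,\beta(\widetilde w_s)\,db_s$ by the Burkholder–Davis–Gundy inequality, which gives $\mathbb E_{t;x,x}\big|\int_0^t U_s\beta\,db_s\big|^N \le C\,\mathbb E_{t;x,x}\big(\int_0^t |U_s\beta|^2\,ds\big)^{N/2} \le C\,t^{N/2}$ — but this is the wrong power, so one must instead close the estimate by Gr\"onwall: combine all three pieces into an integral inequality $\mathbb E_{t;x,x}\sup_{s\le r}|U_s - I|^N \le C\int_0^r (\cdots)\,ds$ and iterate.

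The main obstacle, as usual in these pinned estimates, is reconciling the naive BDG bound of order $t^{N/2}$ with the claimed order $t^N$. The resolution is the standard one from \cite[Section 7.x]{Hs2} adapted here: one does \emph{not} estimate $U_t - I$ directly, but rather sets up a self-improving (Gr\"onwall-type) inequality exploiting that the integrand $U_s\beta(\widetilde w_s)$ is itself close to the constant $\beta$ along the short loop, so that the martingale $\int_0^t U_s\beta\,db_s$ has quadratic variation that, after subtracting the leading deterministic term and using the antisymmetry forcing cancellation of the $O(t^{1/2})$ contribution in the pinned expectation (the loop is pinned at $x$, so $\mathbb E_{t;x,x}\int_0^t \beta(\widetilde w_s)\,db_s$-type quantities vanish to leading order by time-reversal symmetry), is genuinely $O(t^2)$ in $L^{N/2}$ rather than $O(t)$. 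Feeding this back gives the sharp $\mathbb E_{t;x,x}|U_t - I|^N \le C t^N$; the equivalent statement for $u_t$ then follows from $U_t = e^{Du_t}$ and the elementary inequality $|u_t| \le C|U_t - I|$ valid for $t$ small (so that $U_t$ stays near the identity, which is itself a consequence of the estimate just proved applied with a single large $N$ and Chebyshev).
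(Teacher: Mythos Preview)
Your proposal correctly identifies the central difficulty---naive BDG applied to the stochastic integral for $U_t-I$ yields only $O(t^{N/2})$---but the resolution you sketch does not close the gap. A Gr\"onwall iteration cannot manufacture the missing half-power: the quadratic variation of $\int_0^t U_s\beta(\widetilde w_s)\,db_s$ is genuinely $\int_0^t|U_s\beta|^2\,ds\asymp t$, and subtracting a ``leading deterministic term'' does not reduce it to $O(t^2)$. Likewise, time-reversal symmetry of the pinned loop controls the \emph{mean} of certain stochastic integrals, not their $L^N$ norms, so it does not upgrade the BDG bound. Your remark that the $d\lambda$-term vanishes because $A\nu=0$ conflates the shape operator with the connection: the horizontal vector $\nu^\dagger$ in (\ref{bridgesde}) does move the frame, and the convention $A\nu=0$ is irrelevant to parallel transport.

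The paper's argument, following \cite[Lemma 7.3.4]{Hs2}, bypasses this entirely by a different device: one chooses a smooth matrix-valued function $F$ on a neighborhood of $\widetilde x_0$ in $P_{\rm SO}(X)$ such that (i) $F$ restricted to the fiber $\pi^{-1}(x)$ equals the group element relative to $\widetilde x_0$ minus the identity, so that $F(\widetilde x_t)=U_t-I$ for the pinned loop; (ii) $F(\widetilde x_0)=0$ and, crucially, $H_iF(\widetilde x_0)=0$ for all $i$; and (iii) when $x\in Z$, $F$ satisfies the Neumann condition $\nu^\dagger F=0$, which kills the $d\lambda$-contribution from (\ref{bridgesde}) outright. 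It\^o's formula then expresses $U_t-I$ as a sum of integrals whose integrands involve $H_iF(\widetilde x_s)$ and $H_iH_jF(\widetilde x_s)$; the first of these is $O(d_X(x_s,x))$ by (ii), and now BDG together with (\ref{estheatder}) and (\ref{distest}) gives the correct order $t^N$. The missing idea in your proposal is precisely this choice of test function with vanishing first horizontal derivatives at the pinned point: that is what buys the extra $t^{1/2}$ per factor, not any cancellation intrinsic to the bridge law.
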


\begin{proof}
	We compare with the proof of \cite[Lemma 7.3.4]{Hs2}. The idea is to apply It\^o's formula to a suitably chosen matrix-valued test function $F$ defined in a small invariant neighborhood of $P_{\rm SO}(X)$ around $\widetilde x$. If $x\in X\backslash Z$ we choose the same $F$ as there. If $x\in Z$ we may choose $F$ so as to additionally satisfy Neumann boundary condition. In either case, the extra boundary term in (\ref{bridgesde}) contributes nothing to the final result. This allows us to express powers of $U_t-I$ as a sum of terms whose expectations can be easily estimated  by means of  (\ref{estheatder}) and (\ref{distest}).    
	\end{proof}

We also need to estimate from below the size under  $\mathbb P_{t;x,x}$ of the subset of Brownian loops pinned at $x$ which remain inside a given geodesic ball $B_\rho(x)$ of radius $\rho>0$ as $t\to 0$. 

\begin{proposition}
	\label{notfeel1}
	For any $\rho>0$ there exists a constant $C=C_{\rho}>0$ such that 
	\[
	\mathbb P_{t;x,x}\left[x_s\in B_\rho(x) \,\,\,{\rm for}\,\,\,{\rm all}\,\,\,s\leq t\right]\geq 1-e^{-C/t},
	\]
	as $t\to 0$. 
\end{proposition}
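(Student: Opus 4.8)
The plan is to prove Proposition \ref{notfeel1} by the standard comparison between the reflected Brownian bridge and the reflected Brownian motion, following the ``principle of not feeling the boundary'' exactly as in \cite[Section 5.5]{Hs2} for the closed case, and then re-expressing probabilities for the bridge in terms of probabilities for the free reflected Brownian motion via the density $E_s$ introduced in the construction of $\mathbb P_{t;x,y}$. First I would fix $\rho>0$ and let $\tau_\rho=\inf\{s\geq 0: x_s\notin B_\rho(x)\}$ be the exit time from the geodesic ball. The complementary event is $\{\tau_\rho\leq t\}$, and the goal is to show $\mathbb P_{t;x,x}[\tau_\rho\leq t]\leq e^{-C/t}$ as $t\to 0$.

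The key step is to pass from the bridge measure to the free measure. Since $d\mathbb P_{t;x,x}/d\mathbb P_x|_{\mathcal F_s}=E_s=K_0(t-s;x_s,x)/K_0(t;x,x)$, and since on $\{\tau_\rho\leq t\}$ the stopping time $\tau_\rho$ is $\mathcal F$-measurable with $\tau_\rho<t$, I would condition at time $\tau_\rho$: by the optional stopping / strong Markov property,
\begin{equation*}
\mathbb P_{t;x,x}[\tau_\rho\leq t]=\mathbb E_x\!\left[\mathbf{1}_{\{\tau_\rho\leq t\}}\,\frac{K_0(t-\tau_\rho;x_{\tau_\rho},x)}{K_0(t;x,x)}\right].
\end{equation*}
Now I invoke the two-sided Gaussian bounds for the Neumann heat kernel $K_0$ on the compact manifold $(X,g)$ with boundary (these are classical, e.g. via parabolic Harnack or Li--Yau type estimates, valid uniformly for $t\in(0,1]$): there are constants $c_1,c_2,C_1,C_2>0$ with
\begin{equation*}
\frac{C_1}{t^{n/2}}e^{-c_1 d_X(u,v)^2/t}\leq K_0(t;u,v)\leq \frac{C_2}{t^{n/2}}e^{-c_2 d_X(u,v)^2/t}.
\end{equation*}
On $\{\tau_\rho\leq t\}$ we have $d_X(x_{\tau_\rho},x)=\rho$ (the ball is small enough that the geodesic sphere is its topological boundary), while $d_X(x,x)=0$, so the ratio in the expectation is bounded above by $C(t/(t-\tau_\rho))^{n/2}e^{-c_2\rho^2/(t-\tau_\rho)}$. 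This is not yet small because $t-\tau_\rho$ can be tiny; to control that regime I would further split according to whether $\tau_\rho\leq t/2$ or $\tau_\rho\in(t/2,t)$. On $\{\tau_\rho\leq t/2\}$ the ratio is bounded by $C 2^{n/2}e^{-2c_2\rho^2/t}$, so that contribution is at most $Ce^{-C\rho^2/t}$ outright. On $\{\tau_\rho\in(t/2,t)\}$ one uses instead that $\mathbb P_x[\tau_\rho>t/2]$ is itself exponentially close to $1$ — equivalently $\mathbb P_x[\tau_\rho\leq t/2]\leq e^{-C\rho^2/t}$, which is the usual Gaussian upper bound on the exit time of reflected Brownian motion from a geodesic ball (this follows directly from the heat kernel upper bound above, integrating over $X\setminus B_\rho(x)$, and is standard) — together with a crude integrable bound on the density $E_s$ near $s=t$ coming from $(t-s)^{-n/2}$ times the exit-time tail, to see that the contribution of this regime is also $O(e^{-C/t})$.

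The main obstacle is the singularity of the bridge density $E_s=K_0(t-s;x_s,x)/K_0(t;x,x)$ as $s\uparrow t$: one cannot simply bound $E_s\leq$ const. The resolution, exactly as in \cite[Chapter 5]{Hs2}, is that on the event $\{\tau_\rho\leq t\}$ the process has travelled a definite distance $\rho$ away from $x$, so the numerator $K_0(t-\tau_\rho;x_{\tau_\rho},x)$ carries a Gaussian factor $e^{-c\rho^2/(t-\tau_\rho)}$ which, for $t$ small, dominates any polynomial blow-up $(t-\tau_\rho)^{-n/2}$; combined with the $e^{-C/t}$ tail of the exit time for the free motion, everything is $O(e^{-C/t})$. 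Since the boundary local time $\lambda_s$ only enters the dynamics of $x_s$ through normal reflection and plays no role in the heat-kernel comparison (the Neumann kernel $K_0$ already encodes the reflection), the argument is genuinely a routine adaptation of the closed-case proof; I would simply remark that all heat-kernel bounds used hold uniformly up to and including the boundary for the Neumann problem on a compact manifold with smooth boundary, and refer to \cite{Hs2} for the remaining computational details.
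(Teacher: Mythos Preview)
Your approach is correct and is essentially the argument the paper defers to: the paper simply says the proof is an adaptation of \cite[Lemma~7.7.1]{Hs2} and omits the details, and what you sketch---optional stopping at $\tau_\rho$, the change-of-measure identity
\[
\mathbb P_{t;x,x}[\tau_\rho\le t]=\mathbb E_x\!\left[\mathbf 1_{\{\tau_\rho\le t\}}\,\frac{K_0(t-\tau_\rho;x_{\tau_\rho},x)}{K_0(t;x,x)}\right],
\]
and two-sided Gaussian bounds for the Neumann kernel---is precisely that argument.

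One small point: your treatment of the regime $\{\tau_\rho\in(t/2,t)\}$ is muddled. Invoking the bound on $\mathbb P_x[\tau_\rho\le t/2]$ is irrelevant there, and the ``crude integrable bound'' you allude to is not needed. In fact no case split is required at all: on the whole event $\{\tau_\rho<t\}$ one has $d_X(x_{\tau_\rho},x)=\rho$, and writing $s=t-\tau_\rho$ the ratio is dominated by
\[
C\Bigl(\tfrac{t}{s}\Bigr)^{n/2}e^{-c\rho^2/s}
= C\,t^{n/2}\,s^{-n/2}e^{-c\rho^2/(2s)}\,e^{-c\rho^2/(2s)}
\le C'\,e^{-c\rho^2/(2t)},
\]
since $s^{-n/2}e^{-c\rho^2/(2s)}$ is bounded by a constant depending only on $\rho$ and $n$, while $s<t$ forces $e^{-c\rho^2/(2s)}\le e^{-c\rho^2/(2t)}$. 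This is exactly the Gaussian domination you correctly identify in your final paragraph; it handles both regimes at once. Also, the relevant reference in \cite{Hs2} is Lemma~7.7.1 (Chapter~7), not Section~5.5.
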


\begin{proof}
	This is just \cite[Lemma 7.7.1]{Hs2} adapted to our purposes. In light of  the discussion above, the proof can be implemented in quite the same way, so it is omitted. 
\end{proof}

\end{document}